\begin{document}
\title{Weak type estimates for intrinsic square functions on the weighted Morrey spaces}
\author{Hua Wang \footnote{E-mail address: wanghua@pku.edu.cn.}\\
\footnotesize{Department of Mathematics, Zhejiang University, Hangzhou 310027, China}}
\date{}
\maketitle

\begin{abstract}
In this paper, we will obtain the weak type estimates of intrinsic square functions including the Lusin area integral, Littlewood-Paley $g$-function and $g^*_\lambda$-function on the weighted Morrey spaces $L^{1,\kappa}(w)$ for $0<\kappa<1$ and $w\in A_1$.\\
MSC(2010): 42B25; 42B35\\
Keywords: Intrinsic square functions; weighted Morrey spaces; $A_p$ weights
\end{abstract}

\section{Introduction and main results}

Let ${\mathbb R}^{n+1}_+=\mathbb R^n\times(0,\infty)$ and $\varphi_t(x)=t^{-n}\varphi(x/t)$. The classical square function (Lusin area integral) is a familiar object. If $u(x,t)=P_t*f(x)$ is the Poisson integral of $f$, where $P_t(x)=c_n\frac{t}{(t^2+|x|^2)^{{(n+1)}/2}}$ denotes the Poisson kernel in ${\mathbb R}^{n+1}_+$. Then we define the classical square function (Lusin area integral) $S(f)$ by (see \cite{stein2} and \cite{stein})
\begin{equation*}
S(f)(x)=\bigg(\iint_{\Gamma(x)}\big|\nabla u(y,t)\big|^2t^{1-n}\,dydt\bigg)^{1/2},
\end{equation*}
where $\Gamma(x)$ denotes the usual cone of aperture one:
\begin{equation*}
\Gamma(x)=\big\{(y,t)\in{\mathbb R}^{n+1}_+:|x-y|<t\big\}
\end{equation*}
and
\begin{equation*}
\big|\nabla u(y,t)\big|=\left|\frac{\partial u}{\partial t}\right|^2+\sum_{j=1}^n\left|\frac{\partial u}{\partial y_j}\right|^2.
\end{equation*}
Similarly, we can define a cone of aperture $\beta$ for any $\beta>0$:
\begin{equation*}
\Gamma_\beta(x)=\big\{(y,t)\in{\mathbb R}^{n+1}_+:|x-y|<\beta t\big\},
\end{equation*}
and corresponding square function
\begin{equation*}
S_\beta(f)(x)=\bigg(\iint_{\Gamma_\beta(x)}\big|\nabla u(y,t)\big|^2t^{1-n}\,dydt\bigg)^{1/2}.
\end{equation*}
The Littlewood-Paley $g$-function (could be viewed as a ``zero-aperture" version of $S(f)$) and the $g^*_\lambda$-function (could be viewed as an ``infinite aperture" version of $S(f)$) are defined respectively by
\begin{equation*}
g(f)(x)=\bigg(\int_0^\infty\big|\nabla u(x,t)\big|^2 t\,dt\bigg)^{1/2}
\end{equation*}
and
\begin{equation*}
g^*_\lambda(f)(x)=\left(\iint_{{\mathbb R}^{n+1}_+}\bigg(\frac t{t+|x-y|}\bigg)^{\lambda n}\big|\nabla u(y,t)\big|^2 t^{1-n}\,dydt\right)^{1/2}, \quad \lambda>1.
\end{equation*}

The modern (real-variable) variant of $S_\beta(f)$ can be defined in the following way (here we drop the subscript $\beta$ if $\beta=1$). Let $\psi\in C^\infty(\mathbb R^n)$ be real, radial, have support contained in $\{x:|x|\le1\}$, and $\int_{\mathbb R^n}\psi(x)\,dx=0$. The continuous square function $S_{\psi,\beta}(f)$ is defined by (see, for example, \cite{chang} and \cite{chanillo})
\begin{equation*}
S_{\psi,\beta}(f)(x)=\bigg(\iint_{\Gamma_\beta(x)}\big|f*\psi_t(y)\big|^2\frac{dydt}{t^{n+1}}\bigg)^{1/2}.
\end{equation*}

In 2007, Wilson \cite{wilson1} introduced a new square function called intrinsic square function which is universal in a sense (see also \cite{wilson2}). This function is independent of any particular kernel $\psi$, and it dominates pointwise all the above-defined square functions. On the other hand, it is not essentially larger than any particular $S_{\psi,\beta}(f)$. For $0<\alpha\le1$, let ${\mathcal C}_\alpha$ be the family of functions $\varphi$ defined on $\mathbb R^n$ such that $\varphi$ has support containing in $\{x\in\mathbb R^n: |x|\le1\}$, $\int_{\mathbb R^n}\varphi(x)\,dx=0$, and for all $x, x'\in \mathbb R^n$,
\begin{equation*}
|\varphi(x)-\varphi(x')|\le|x-x'|^\alpha.
\end{equation*}
For $(y,t)\in {\mathbb R}^{n+1}_+$ and $f\in L^1_{{loc}}(\mathbb R^n)$, we set
\begin{equation*}
A_\alpha(f)(y,t)=\sup_{\varphi\in{\mathcal C}_\alpha}\big|f*\varphi_t(y)\big|=\sup_{\varphi\in{\mathcal C}_\alpha}\bigg|\int_{\mathbb R^n}\varphi_t(y-z)f(z)\,dz\bigg|.
\end{equation*}
Then we define the intrinsic square function of $f$ (of order $\alpha$) by the formula
\begin{equation*}
\mathcal S_\alpha(f)(x)=\left(\iint_{\Gamma(x)}\Big(A_\alpha(f)(y,t)\Big)^2\frac{dydt}{t^{n+1}}\right)^{1/2}.
\end{equation*}
We can also define varying-aperture versions of $\mathcal S_\alpha(f)$ by the formula
\begin{equation*}
\mathcal S_{\alpha,\beta}(f)(x)=\left(\iint_{\Gamma_\beta(x)}\Big(A_\alpha(f)(y,t)\Big)^2\frac{dydt}{t^{n+1}}\right)^{1/2}.
\end{equation*}
The intrinsic Littlewood-Paley $g$-function and the intrinsic $g^*_\lambda$-function will be given respectively by
\begin{equation*}
g_\alpha(f)(x)=\left(\int_0^\infty\Big(A_\alpha(f)(x,t)\Big)^2\frac{dt}{t}\right)^{1/2}
\end{equation*}
and
\begin{equation*}
g^*_{\lambda,\alpha}(f)(x)=\left(\iint_{{\mathbb R}^{n+1}_+}\left(\frac t{t+|x-y|}\right)^{\lambda n}\Big(A_\alpha(f)(y,t)\Big)^2\frac{dydt}{t^{n+1}}\right)^{1/2}, \quad \lambda>1.
\end{equation*}

In \cite{wilson1} and \cite{wilson2}, Wilson has established the following theorems.

\newtheorem*{thma}{Theorem A}

\begin{thma}
Let $0<\alpha\le1$, $1<p<\infty$ and $w\in A_p (\mbox{Muckenhoupt weight class})$. Then there exists a constant $C>0$ independent of $f$ such that
\begin{equation*}
\|\mathcal S_\alpha(f)\|_{L^p_w}\le C \|f\|_{L^p_w}.
\end{equation*}
\end{thma}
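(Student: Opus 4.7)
The plan is to reduce the weighted inequality to two standard ingredients: an unweighted $L^2$ estimate for $\mathcal{S}_\alpha$ and a good-$\lambda$ inequality comparing $\mathcal{S}_\alpha(f)$ to the Hardy--Littlewood maximal function $Mf$. Once these are in hand, the classical Muckenhoupt/Coifman machinery (good-$\lambda$ transfers along $A_\infty$ weights, and $M$ is bounded on $L^p_w$ for $w\in A_p$) delivers the full range $1<p<\infty$ automatically, so no separate interpolation or duality step is needed.

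For the unweighted $L^2$ bound, I would apply Fubini to rewrite
\[
\|\mathcal{S}_\alpha(f)\|_{L^2(\mathbb{R}^n)}^2
 \;=\; c_n \iint_{\mathbb{R}^{n+1}_+} A_\alpha(f)(y,t)^2 \,\frac{dy\,dt}{t},
\]
and then exploit a uniform Fourier estimate on $\mathcal{C}_\alpha$: every $\varphi\in\mathcal{C}_\alpha$ satisfies $|\widehat{\varphi}(\xi)|\le C\min(|\xi|^\alpha,|\xi|^{-\alpha})$, with $C=C(n,\alpha)$. The lower power uses the cancellation $\int\varphi=0$ together with the H\"older condition of order $\alpha$; the upper power uses the compact support and the H\"older condition again. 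Linearizing the supremum in $A_\alpha(f)(y,t)$ by a measurable choice $\varphi^{(y,t)}\in\mathcal{C}_\alpha$ and applying Plancherel at each fixed $t$ produces
\[
\int_{\mathbb{R}^n} A_\alpha(f)(y,t)^2\,dy \;\le\; C\int_{\mathbb{R}^n}\min\bigl((|\xi|t)^{2\alpha},(|\xi|t)^{-2\alpha}\bigr)|\widehat{f}(\xi)|^2\,d\xi,
\]
and integrating $dt/t$ telescopes to $C\|f\|_{L^2}^2$.

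For the good-$\lambda$ step, I would follow the Fefferman--Stein/Journ\'e template. Given $\lambda>0$, take a Whitney decomposition $\{Q_j\}$ of $\Omega_\lambda = \{\mathcal{S}_\alpha(f)>\lambda\}$. On each $Q_j$ one splits the cone integral defining $\mathcal{S}_\alpha$ into a local part, driven by $f\chi_{2Q_j}$ and controlled in $L^2$ by the unweighted bound above, and a far part, which by the H\"older smoothness and cancellation of every $\varphi\in\mathcal{C}_\alpha$ is dominated pointwise on $Q_j$ by a constant times $Mf$. This yields, for every $A_\infty$ weight $w$,
\[
w\bigl(\{\mathcal{S}_\alpha(f)>2\lambda,\,Mf\le\gamma\lambda\}\bigr)\le C\,\varepsilon(\gamma)\,w(\Omega_\lambda),
\]
with $\varepsilon(\gamma)\to 0$ as $\gamma\to 0$. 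Integrating in $\lambda$ and using the $A_p$-boundedness of $M$ concludes $\|\mathcal{S}_\alpha(f)\|_{L^p_w}\le C\|Mf\|_{L^p_w}\le C\|f\|_{L^p_w}$.

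The main obstacle is the first step: keeping the supremum over $\varphi\in\mathcal{C}_\alpha$ under control. Unlike a fixed-kernel continuous square function, $A_\alpha(f)(y,t)$ is not linear in $f$, so one cannot invoke Plancherel directly and must linearize by measurable selection while showing that the uniform Fourier bound on $\mathcal{C}_\alpha$ is robust enough to survive the supremum. Once this universal $L^2$ bound is in place, the \emph{intrinsic} nature of $\mathcal{S}_\alpha$ plays no further role: the good-$\lambda$ comparison and the passage to $A_p$ weights are routine adaptations of the classical Lusin area function argument, with the H\"older condition of order $\alpha$ on $\mathcal{C}_\alpha$ replacing the smoothness of a fixed convolution kernel in the far-part estimate.
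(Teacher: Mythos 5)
First, a point of comparison: the paper offers no proof of Theorem A at all; it is quoted from Wilson's work (\cite{wilson1}, \cite{wilson2}), so your attempt can only be measured against Wilson's original argument. Your overall architecture --- an unweighted $L^2$ bound for $\mathcal S_\alpha$, a good-$\lambda$ inequality against $Mf$ valid for all $A_\infty$ weights, and then the $A_p$-boundedness of $M$ --- is the classical route and is in principle workable, and your uniform Fourier estimate $|\widehat{\varphi}(\xi)|\le C\min(|\xi|^{\alpha},|\xi|^{-\alpha})$ for $\varphi\in\mathcal C_\alpha$ is correct.

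However, there is a genuine gap exactly at the point you yourself flag as ``the main obstacle,'' and the mechanism you propose does not close it. After linearizing the supremum by a measurable selection $\varphi^{(y,t)}\in\mathcal C_\alpha$, the quantity $f*\varphi^{(y,t)}_t(y)$ is no longer the convolution of $f$ with a fixed kernel: the kernel $K_t(y,z)=\varphi^{(y,t)}_t(y-z)$ depends on the output variable $y$. Plancherel at fixed $t$ is therefore unavailable, and a bound on $|\widehat{\varphi}|$ that is uniform over $\mathcal C_\alpha$ does not transfer to an $L^2$ operator-norm bound for $g\mapsto\int K_t(\cdot,z)g(z)\,dz$; your displayed inequality for $\int A_\alpha(f)(y,t)^2\,dy$ is true, but it does not follow from the argument given. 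The standard repair --- and essentially what Wilson does --- is to tame the supremum pointwise in $(y,t)$ \emph{before} integrating in $y$: with a Calder\'on reproducing formula $f=\int_0^\infty \tilde\psi_s*\psi_s*f\,\frac{ds}{s}$ for a fixed nondegenerate $\psi$, one writes $f*\varphi_t(y)=\int_0^\infty(\varphi_t*\psi_s)*(\tilde\psi_s*f)(y)\,\frac{ds}{s}$ and checks that $|\varphi_t*\psi_s|$ is dominated by $C\min\bigl((s/t)^{\alpha'},(t/s)^{\alpha'}\bigr)$ times a normalized bump, uniformly over $\varphi\in\mathcal C_\alpha$. The supremum in $\varphi$ then costs nothing, and $\mathcal S_\alpha(f)$ is dominated pointwise by a fixed-kernel square function of slightly larger aperture, to which the classical weighted theory applies directly (this also makes your good-$\lambda$ step unnecessary). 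A secondary issue: in your good-$\lambda$ argument the local part $\mathcal S_\alpha(f\chi_{2Q_j})$ on a Whitney cube $Q_j$ cannot be controlled by the $L^2$ bound together with the hypothesis $Mf\le\gamma\lambda$ alone, since $\|f\chi_{2Q_j}\|_{L^2}^2$ is not dominated by $(Mf(\bar x_j))^2|Q_j|$; one needs a weak $(1,1)$ estimate or a Calder\'on--Zygmund splitting there, which is standard but must be supplied.
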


\newtheorem*{thmb}{Theorem B}

\begin{thmb}
Let $0<\alpha\le1$ and $p=1$. Then for any given weight function $w$ and $\lambda>0$, there exists a constant $C>0$ independent of $f$ and $\lambda$ such that
\begin{equation*}
w\big(\big\{x\in\mathbb R^n:\mathcal S_\alpha(f)(x)>\lambda\big\}\big)\le\frac{C}{\lambda}\int_{\mathbb R^n}|f(x)|Mw(x)\,dx,
\end{equation*}
where $M$ denotes the standard Hardy-Littlewood maximal operator.
\end{thmb}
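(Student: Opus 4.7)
The plan is to run a Calder\'on--Zygmund decomposition argument of Fefferman--Stein type. Given $\lambda>0$ and $f\in L^1(Mw)$, I would form the CZ decomposition of $|f|$ at height $\lambda$, producing pairwise disjoint dyadic cubes $\{Q_j\}$ with $\lambda<|Q_j|^{-1}\int_{Q_j}|f|\le 2^n\lambda$, $|f|\le\lambda$ a.e.\ off $\bigcup_jQ_j$, and $\sum_j|Q_j|\le\lambda^{-1}\|f\|_{L^1}$. Writing $f=g+b$ with $b=\sum_jb_j$, $b_j=(f-f_{Q_j})\chi_{Q_j}$, and fixing a concentric dilate $Q_j^*$ of $Q_j$ (say $Q_j^*=2\sqrt{n}\,Q_j$), the subadditivity of $A_\alpha$ reduces the claim to bounding the three pieces
\begin{equation*}
\mathrm{(I)}\ w(\{\mathcal S_\alpha g>\lambda/2\}),\quad\mathrm{(II)}\ \sum_jw(Q_j^*),\quad\mathrm{(III)}\ w\bigl(\bigl\{x\notin\textstyle\bigcup_jQ_j^*:\mathcal S_\alpha b(x)>\lambda/2\bigr\}\bigr).
\end{equation*}

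Piece (I) is handled in standard fashion: combining $\|g\|_\infty\lesssim\lambda$, $\int|g|Mw\lesssim\int|f|Mw$, and Chebyshev with a Fefferman--Stein type $L^2$-estimate $\int(\mathcal S_\alpha g)^2 w\le C\int g^2 Mw$ (obtainable from Theorem A at $p=2$ applied to suitable $A_2$ weights), together with $w\le Mw$, yields $\mathrm{(I)}\le(C/\lambda)\int|f|Mw$. Piece (II) is immediate from the definition of $M$: the inequality $\int_{Q_j^*}w\le|Q_j^*|\operatorname{ess\,inf}_{Q_j}Mw$, combined with $\lambda|Q_j|<\int_{Q_j}|f|$, gives $\mathrm{(II)}\le(C/\lambda)\int|f|Mw$.

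The core of the proof, and the step I expect to be the main obstacle, is piece (III). Using the cancellation $\int b_j=0$, I would write
\begin{equation*}
b_j*\varphi_t(y)=\int_{Q_j}\bigl(\varphi_t(y-z)-\varphi_t(y-y_j)\bigr)b_j(z)\,dz,
\end{equation*}
where $y_j$ is the centre and $r_j$ the side-length of $Q_j$. The $\alpha$-H\"older continuity of $\varphi\in\mathcal C_\alpha$ together with its compact support then gives the key estimate
\begin{equation*}
A_\alpha(b_j)(y,t)\le C\frac{r_j^\alpha}{t^{n+\alpha}}\|b_j\|_{L^1}\chi_{\{|y-y_j|\le t+\sqrt{n}\,r_j\}}.
\end{equation*}
For $x\notin Q_j^*$, the cone condition $|y-x|<t$ combined with this indicator forces $t\gtrsim|x-y_j|$; integrating in $(y,t)$ therefore yields the pointwise decay $\mathcal S_\alpha b_j(x)\le C\|b_j\|_{L^1}r_j^\alpha/|x-y_j|^{n+\alpha}$. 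Integrating this against $w(x)$ over dyadic annuli centred at $y_j$, using $w(B)/|B|\le\operatorname{ess\,inf}_{Q_j}Mw$ on each annulus $B\supset Q_j$, gives $\int_{(Q_j^*)^c}\mathcal S_\alpha b_j\cdot w\le C\|b_j\|_{L^1}\operatorname{ess\,inf}_{Q_j}Mw\le C\int_{Q_j}|f|Mw$; summing in $j$ and applying Chebyshev completes the argument. The hard part is precisely this off-diagonal decay bound, which hinges delicately on the $\alpha$-H\"older modulus and the compact support built into $\mathcal C_\alpha$.
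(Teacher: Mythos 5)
The paper never proves Theorem B: it is quoted verbatim from Wilson's papers \cite{wilson1,wilson2} and used as a black box (Theorem 3.1 is deduced from it). The closest in-paper argument is the proof of Theorem 4.2, which runs exactly your Calder\'on--Zygmund scheme for $g^*_{\lambda,\alpha}$ in the special case $w\in A_1$. Your pieces (II) and (III) are correct and essentially coincide with the paper's treatment of $\mbox{IV}^{(1)}$ and $\mbox{IV}^{(2)}$ there: the cancellation of $b_j$ plus the $\alpha$-H\"older bound gives $\mathcal S_\alpha b_j(x)\lesssim \|b_j\|_{L^1}r_j^\alpha|x-y_j|^{-(n+\alpha)}$ off $Q_j^*$, and your dyadic-annuli integration against $w$ (using $w(B)/|B|\le\inf_{Q_j}Mw$) is the general-weight substitute for the paper's Lemma 2.3. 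So the architecture is right, and you correctly identify the off-diagonal decay as the step where the structure of $\mathcal C_\alpha$ enters.

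The genuine gap is piece (I), and it is twofold. First, the $L^2$ Fefferman--Stein inequality $\int(\mathcal S_\alpha g)^2w\,dx\le C\int|g|^2Mw\,dx$ does \emph{not} follow from Theorem A at $p=2$: Theorem A requires the weight on both sides to be in $A_2$, whereas here the left-hand weight is an arbitrary $w$ and $Mw$ need not belong to any $A_p$ class (and $w\le Mw$ points the wrong way). In the paper's Theorem 4.2 this step is painless only because $w\in A_1$ forces $Mw\le Cw$, so Theorem A applies directly; for general $w$ the inequality is a separate nontrivial theorem (Chanillo--Wheeden \cite{chanillo} for the area integral, Wilson for the intrinsic version), and you would have to either cite it or prove it. Second, the asserted bound $\int|g|Mw\lesssim\int|f|Mw$ is false for general weights: on a CZ cube $Q_j$ one gets $\int_{Q_j}|g|Mw=\big(|Q_j|^{-1}\int_{Q_j}|f|\big)\int_{Q_j}Mw$, and $\int_{Q_j}Mw$ may be infinite even when $\int_{Q_j}|f|Mw$ is finite (e.g.\ when $w\in L^1$ locally but $w\log^+w\notin L^1$, so that $Mw\notin L^1_{loc}$, while $f$ lives where $Mw$ is tame). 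The standard repair is to estimate the good part against the truncated weight $\widetilde w=w\chi_{(\bigcup_jQ_j^*)^c}$ --- legitimate since $\bigcup_jQ_j^*$ is already accounted for in piece (II) --- and to observe that $M\widetilde w\le C\inf_{Q_j}Mw$ on $Q_j$, because any ball through $Q_j$ meeting $(Q_j^*)^c$ has radius comparable to or larger than $\ell(Q_j)$. Without these two adjustments the estimate of $w(\{\mathcal S_\alpha g>\lambda/2\})$ does not close.
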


Moreover, in \cite{lerner}, Lerner showed sharp $L^p_w$ norm inequalities for the intrinsic square functions in terms of the $A_p$ characteristic constant of $w$ for all $1<p<\infty$. For further discussions about the boundedness of intrinsic square functions on some other weighted spaces, we refer the reader to \cite{huang,wang5,wang6,wang3,wang4}.

On the other hand, the classical Morrey spaces $\mathcal L^{p,\lambda}$ were first introduced by Morrey in \cite{morrey} to study the
local behavior of solutions to second order elliptic partial differential equations. Since then, these spaces play an important role in studying the regularity of solutions to partial differential equations. For the boundedness of the
Hardy-Littlewood maximal operator, the fractional integral operator and the Calder\'on-Zygmund singular integral
operator on these spaces, we refer the reader to \cite{adams,chiarenza,peetre}. For the properties and applications
of classical Morrey spaces, see \cite{fan,fazio1,fazio2} and the references therein.

In 2009, Komori and Shirai \cite{komori} first defined the weighted Morrey spaces $L^{p,\kappa}(w)$ which could be
viewed as an extension of weighted Lebesgue spaces, and studied the boundedness of the above classical operators on
these weighted spaces. Recently, in \cite{wang1,wang5,wang7,wang2}, we have established the continuity properties of some other operators on the weighted Morrey spaces $L^{p,\kappa}(w)$.

In \cite{wang5}, we studied the boundedness properties of intrinsic square functions on the weighted Morrey spaces $L^{p,\kappa}(w)$ for all $1<p<\infty$, $0<\kappa<1$ and $w\in A_p$. As a continuation of this work, the main purpose of this paper is to investigate their weak type estimates on the weighted Morrey spaces $L^{1,\kappa}(w)$ when $0<\kappa<1$ and $w\in A_1$. Our main results in the paper are formulated as follows.

\newtheorem{theorem}{Theorem}[section]

\begin{theorem}
Let $0<\alpha\le1$, $0<\kappa<1$ and $w\in A_1$. Then there is a
constant $C>0$ independent of $f$ such that
\begin{equation*}
\big\|\mathcal S_\alpha(f)\big\|_{WL^{1,\kappa}(w)}\le C\|f\|_{L^{1,\kappa}(w)}.
\end{equation*}
\end{theorem}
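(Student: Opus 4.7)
The plan is to adapt the standard local/global decomposition on weighted Morrey spaces to the weak endpoint. Fix an arbitrary ball $B=B(x_0,r_B)$ and split $f=f_1+f_2$ with $f_1=f\chi_{2B}$ and $f_2=f\chi_{(2B)^c}$. By sublinearity of $\mathcal{S}_\alpha$,
\begin{equation*}
w\bigl(\{x\in B:\mathcal{S}_\alpha(f)(x)>\lambda\}\bigr)\le I_1(\lambda,B)+I_2(\lambda,B),
\end{equation*}
where $I_i(\lambda,B):=w\bigl(\{x\in B:\mathcal{S}_\alpha(f_i)(x)>\lambda/2\}\bigr)$. I will bound both quantities by $Cw(B)^\kappa\|f\|_{L^{1,\kappa}(w)}/\lambda$ uniformly in $B$ and $\lambda$, and then take the supremum to recover $\|\mathcal{S}_\alpha f\|_{WL^{1,\kappa}(w)}$.

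To handle $I_1$, apply Theorem B to $f_1$ on all of $\mathbb{R}^n$. Since $w\in A_1$ implies $Mw(x)\le [w]_{A_1}w(x)$ for almost every $x$, Theorem B collapses to the weighted weak-$(1,1)$ estimate
\begin{equation*}
w\bigl(\{\mathcal{S}_\alpha(f_1)>\lambda/2\}\bigr)\le \frac{C}{\lambda}\int_{2B}|f(x)|\,w(x)\,dx.
\end{equation*}
The Morrey definition gives $\int_{2B}|f|\,w\le w(2B)^\kappa\|f\|_{L^{1,\kappa}(w)}$, and doubling of $A_1$ weights yields $w(2B)\le Cw(B)$, so $I_1\le Cw(B)^\kappa\|f\|_{L^{1,\kappa}(w)}/\lambda$.

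The technical core is a pointwise estimate on the far-away piece. Each $\varphi\in\mathcal{C}_\alpha$ is $\alpha$-H\"older and vanishes on $|x|=1$, so $\|\varphi\|_\infty\le 2^\alpha$ and consequently $A_\alpha(g)(y,t)\le Ct^{-n}\int_{|y-z|\le t}|g(z)|\,dz$. Decomposing $(2B)^c=\bigcup_{j\ge 1}E_j$ with $E_j=2^{j+1}B\setminus 2^jB$ and applying $\mathcal{S}_\alpha$ termwise, I observe that for $x\in B$, $(y,t)\in\Gamma(x)$ and $z\in E_j$ with $|y-z|\le t$ one has $|x-z|<2t$ while $|x-z|\ge 2^{j-1}r_B$; this forces $t>2^{j-2}r_B$. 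Since $|\{y:|x-y|<t\}|\le Ct^n$, integrating the crude bound $A_\alpha(f\chi_{E_j})(y,t)\le Ct^{-n}\int_{E_j}|f|$ against $t^{-n-1}\,dy\,dt$ over this range yields
\begin{equation*}
\mathcal{S}_\alpha(f\chi_{E_j})(x)\le \frac{C}{|2^{j+1}B|}\int_{2^{j+1}B}|f(z)|\,dz.
\end{equation*}
I then pass to a weighted average through the $A_1$ bound $1\le [w]_{A_1}|2^{j+1}B|w(z)/w(2^{j+1}B)$, apply the Morrey condition, and sum the resulting geometric series using the $A_\infty$ growth $w(B)/w(2^{j+1}B)\le C 2^{-jn\delta}$ for some $\delta>0$. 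This produces the uniform bound
\begin{equation*}
\mathcal{S}_\alpha(f_2)(x)\le \frac{C\,\|f\|_{L^{1,\kappa}(w)}}{w(B)^{1-\kappa}}\qquad\text{for every }x\in B.
\end{equation*}

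With this pointwise bound the argument concludes by a two-case analysis on $\lambda$: if $\lambda>2C\|f\|_{L^{1,\kappa}(w)}/w(B)^{1-\kappa}$ the $f_2$-level set inside $B$ is empty, so $I_2=0$; otherwise the trivial estimate $\lambda I_2\le\lambda w(B)\le Cw(B)^\kappa\|f\|_{L^{1,\kappa}(w)}$ suffices. Combining with the estimate on $I_1$ and taking the supremum over $\lambda>0$ and $B$ completes the proof. The main obstacle is the pointwise estimate on $\mathcal{S}_\alpha(f_2)$: the interaction between the support condition on $\varphi\in\mathcal{C}_\alpha$, the cone $\Gamma(x)$, and the annular decomposition must be tracked carefully to produce the lower bound $t\ge c\,2^j r_B$ that makes the $t$-integration converge, and the $A_\infty$ reverse-growth estimate for $w(2^{j+1}B)$ is essential to sum the resulting series.
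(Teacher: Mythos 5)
Your proposal is correct and follows essentially the same route as the paper's own proof: the same decomposition $f=f\chi_{2B}+f\chi_{(2B)^c}$, the weighted weak $(1,1)$ bound from Theorem B specialized to $w\in A_1$ for the local part, the pointwise bound $\mathcal S_\alpha(f_2)(x)\le C\|f\|_{L^{1,\kappa}(w)}w(B)^{\kappa-1}$ obtained from the cone geometry ($t\gtrsim 2^jr_B$), the $A_1$ and reverse H\"older conditions to sum the annular series, and the same empty-set/trivial-bound dichotomy for $I_2$. No gaps.
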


\begin{theorem}
Let $0<\alpha\le1$, $0<\kappa<1$ and $w\in A_1$. If $\lambda>{(3n+2\alpha)}/n$, then there is a
constant $C>0$ independent of $f$ such that
\begin{equation*}
\big\|g^*_{\lambda,\alpha}(f)\big\|_{WL^{1,\kappa}(w)}\le C\|f\|_{L^{1,\kappa}(w)}.
\end{equation*}
\end{theorem}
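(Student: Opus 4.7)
The plan is to reduce Theorem~1.2 to the area-integral case of Theorem~1.1 via a standard pointwise decomposition of $g^*_{\lambda,\alpha}(f)$ in terms of varying-aperture intrinsic square functions. I would split the upper half-space as $\mathbb{R}^{n+1}_+=\Gamma(x)\cup\bigcup_{k\ge 1}[\Gamma_{2^k}(x)\setminus\Gamma_{2^{k-1}}(x)]$; on the $k$-th annular piece $|x-y|\sim 2^k t$, so $(t/(t+|x-y|))^{\lambda n}\le C 2^{-k\lambda n}$, which yields
\begin{equation*}
g^*_{\lambda,\alpha}(f)(x)\le C\,\mathcal S_\alpha(f)(x)+C\sum_{k=1}^\infty 2^{-k\lambda n/2}\,\mathcal S_{\alpha,2^k}(f)(x).
\end{equation*}

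The next step is to establish a weighted Morrey weak-type bound of the form
\begin{equation*}
\big\|\mathcal S_{\alpha,\beta}(f)\big\|_{WL^{1,\kappa}(w)}\le C\,\beta^{3n/2+\alpha}\,\|f\|_{L^{1,\kappa}(w)},\qquad \beta=2^k\ge 1,
\end{equation*}
mirroring the proof of Theorem~1.1 but tracking all constants as explicit powers of $\beta$. Fix a ball $B=B(x_0,r_B)$ and write $f=f\chi_{2B}+f\chi_{(2B)^c}=:f_1+f_2$. For $f_1$, one applies a varying-aperture analog of Theorem~B with the measure $Mw\,dx$ (the $\beta$-power arises from the dilation of the Calder\'on--Zygmund cubes in the weak $(1,1)$ argument together with the $\mathcal S_\alpha \to \mathcal S_{\alpha,\beta}$ comparison), then invokes $w\in A_1$ so that $Mw\lesssim w$, and uses the doubling of $w$ to pass from $w(2B)$ to $w(B)$. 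For $f_2$, the conditions $(y,t)\in\Gamma_\beta(x)$, $x\in B$, and $z\in(2B)^c\cap\mathrm{supp}\,\varphi_t(y-\cdot)$ force $t\gtrsim r_B/\beta$; a dyadic decomposition $(2B)^c=\bigcup_{j\ge 1}(2^{j+1}B\setminus 2^jB)$ combined with the mean-zero and H\"older properties of any $\varphi\in\mathcal C_\alpha$ extracts a factor $2^{-j\alpha}$ per annulus, and the geometric series in $j$ produces a pointwise bound of the desired $\beta$-order against $w(B)^{\kappa-1}\|f\|_{L^{1,\kappa}(w)}$.

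To assemble the pieces I would use the distributional selection inequality: for any positive sequence $c_k$ with $\sum_k c_k = 1$, $\{\sum_k a_k b_k>\sigma\}\subset\bigcup_k\{b_k>(c_k/a_k)\sigma\}$. Taking $a_k=2^{-k\lambda n/2}$ and $c_k=(1-2^{-\eta})2^{-k\eta}$ for a small $\eta>0$ gives
\begin{equation*}
\big\{g^*_{\lambda,\alpha}(f)>\sigma\big\}\subset\big\{\mathcal S_\alpha(f)>c\sigma\big\}\cup\bigcup_{k\ge 1}\big\{\mathcal S_{\alpha,2^k}(f)>c\,2^{k(\lambda n/2-\eta)}\sigma\big\},
\end{equation*}
and Theorem~1.1 together with the bound of the second step controls $w(\{g^*_{\lambda,\alpha}(f)>\sigma\}\cap B)$ by $C\sigma^{-1}w(B)^\kappa\|f\|_{L^{1,\kappa}(w)}\sum_{k\ge 0}2^{k(3n/2+\alpha-\lambda n/2+\eta)}$. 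The geometric series converges precisely when $\lambda n/2>3n/2+\alpha+\eta$, and letting $\eta\downarrow 0$ recovers the hypothesis $\lambda>(3n+2\alpha)/n$. I expect the main difficulty to be the quantitative aperture-change estimate of the second step: the exponent $3n/2+\alpha$ must match the threshold exactly, and in particular the $\alpha$ contribution is forced by the H\"older regularity of $\varphi\in\mathcal C_\alpha$ in the far-part pointwise bound.
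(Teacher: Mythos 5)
Your overall architecture is sound and, in substance, it is a per-aperture reorganization of the paper's own argument rather than a genuinely different route: the paper uses the same pointwise domination $g^*_{\lambda,\alpha}(f)^2\le C\big[\mathcal S_\alpha(f)^2+\sum_j2^{-j\lambda n}\mathcal S_{\alpha,2^j}(f)^2\big]$, the same $L^2_w$ aperture lemma $\|\mathcal S_{\alpha,2^j}(f)\|_{L^2_w}\le C\,2^{jn/2}\|\mathcal S_\alpha(f)\|_{L^2_w}$, the same Calder\'on--Zygmund decomposition with the kernel estimate $|\mathcal S_{\alpha,2^j}(b_i)(x)|\le C\,2^{j(3n+2\alpha)/2}\ell(Q_i)^\alpha|x-c_i|^{-n-\alpha}\int_{Q_i}|f|$, and the same far-part pointwise bound $|\mathcal S_{\alpha,2^j}(f_2)(x)|\le C\,2^{3jn/2}w(B)^{\kappa-1}\|f\|_{L^{1,\kappa}(w)}$. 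The difference is organizational: the paper proves one global weak $(1,1)$ estimate for $g^*_{\lambda,\alpha}$ (its Theorem 4.2), performing the Calder\'on--Zygmund decomposition once for the entire sum over apertures, and then only needs a pointwise estimate for the far part of $f$; you instead prove a quantitative weak Morrey bound for each single aperture and reassemble with the selection inequality $\{\sum_ka_kb_k>\sigma\}\subset\bigcup_k\{b_k>(c_k/a_k)\sigma\}$. Both yield the same threshold $\lambda>(3n+2\alpha)/n$; your version makes the aperture dependence of the weak-type constant explicit, at the cost of having to prove the varying-aperture analogue of Theorem B (which you correctly identify as the main labor, and whose exponent $\beta^{3n/2+\alpha}$ you get right). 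Also, there is no need to ``let $\eta\downarrow0$'': since $\lambda>(3n+2\alpha)/n$ is strict, you simply fix $\eta$ small once and for all.

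One step as described would fail. In the $f_2$ estimate you claim that the geometric decay over the annuli $2^{j+1}B\setminus2^{j}B$ is extracted from the mean-zero and H\"older properties of $\varphi\in\mathcal C_\alpha$, giving a factor $2^{-j\alpha}$ per annulus. Cancellation is not available here: $f_2=f\chi_{(2B)^c}$ has no vanishing moment, and the piece of $f_2$ living on the $j$-th annulus is spread over a set of diameter comparable to (indeed larger than) the smallest admissible scale $t\gtrsim2^{j}r_B/\beta$, so replacing $\varphi_t(y-z)$ by $\varphi_t(y-z_0)$ gains nothing and leaves an uncontrolled remainder. The size bound $|\varphi_t|\le C t^{-n}$ alone yields only $\sum_j|2^{j+1}B|^{-1}\int_{2^{j+1}B}|f|$, which a priori need not converge. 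The convergence must instead come from the weight: by the $A_1$ condition, $|2^{j+1}B|^{-1}\int_{2^{j+1}B}|f|\le C\,w(2^{j+1}B)^{\kappa-1}\|f\|_{L^{1,\kappa}(w)}$, and since $w\in A_1\subset RH_r$ for some $r>1$, Lemma 2.2 gives $w(B)/w(2^{j+1}B)\le C\,2^{-jn(r-1)/r}$, so the resulting series $\sum_j\big(2^{-jn}\big)^{(1-\kappa)(r-1)/r}$ converges precisely because $\kappa<1$. This is the content of the paper's inequalities (3)--(5), and it is where the hypotheses $w\in A_1$ and $0<\kappa<1$ actually enter the far-part estimate; with this substitution your proposal goes through.
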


In \cite{wilson1}, Wilson also showed that for any $0<\alpha\le1$, the functions $\mathcal S_\alpha(f)(x)$ and $g_\alpha(f)(x)$ are pointwise comparable. Thus, as a direct consequence of Theorem 1.1, we obtain the following

\newtheorem{corollary}[theorem]{Corollary}

\begin{corollary}
Let $0<\alpha\le1$, $0<\kappa<1$ and $w\in A_1$. Then there is a
constant $C>0$ independent of $f$ such that
\begin{equation*}
\big\|g_\alpha(f)\big\|_{WL^{1,\kappa}(w)}\le C\|f\|_{L^{1,\kappa}(w)}.
\end{equation*}
\end{corollary}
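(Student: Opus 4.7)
The plan is to deduce Corollary 1.3 directly from Theorem 1.1 via the pointwise comparability between $\mathcal{S}_\alpha(f)$ and $g_\alpha(f)$ observed by Wilson in \cite{wilson1}. Specifically, I would invoke the existence of an absolute constant $C_0>0$ (depending only on $n$ and $\alpha$) such that
\[
g_\alpha(f)(x)\le C_0\,\mathcal S_\alpha(f)(x)
\]
holds for every $f\in L^1_{loc}(\mathbb R^n)$ and every $x\in\mathbb R^n$. This reduction is the only structural idea needed; the rest is bookkeeping at the level of the weak weighted Morrey quasi-norm.

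Next I would use the fact that $\|\cdot\|_{WL^{1,\kappa}(w)}$ is monotone under pointwise domination, which is immediate from its definition as a supremum of quantities of the form $\lambda\, w(\{x\in B:F(x)>\lambda\})/w(B)^\kappa$ over all balls $B$ and all levels $\lambda>0$. Indeed, the pointwise inequality above implies the set inclusion
\[
\{x\in\mathbb R^n:g_\alpha(f)(x)>\lambda\}\subset\{x\in\mathbb R^n:\mathcal S_\alpha(f)(x)>\lambda/C_0\},
\]
so after a routine change of variable $\lambda\mapsto\lambda/C_0$ one obtains
\[
\|g_\alpha(f)\|_{WL^{1,\kappa}(w)}\le C_0\,\|\mathcal S_\alpha(f)\|_{WL^{1,\kappa}(w)}.
\]

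Finally, I would apply Theorem 1.1 (which is the substantive content of the paper) to bound $\|\mathcal S_\alpha(f)\|_{WL^{1,\kappa}(w)}\le C\|f\|_{L^{1,\kappa}(w)}$, and chain the two inequalities to conclude. Since the argument reduces entirely to the pointwise comparability of $g_\alpha$ and $\mathcal S_\alpha$ together with Theorem 1.1, there is no genuine obstacle; the only point worth checking explicitly is the monotonicity of the weak Morrey quasi-norm, which is essentially tautological. For this reason the corollary is genuinely a one-line consequence of Theorem 1.1, and the proof need not be more than a few lines long.
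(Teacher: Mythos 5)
Your proposal is correct and follows exactly the route the paper intends: the corollary is deduced from Theorem 1.1 via Wilson's pointwise comparability of $g_\alpha(f)$ and $\mathcal S_\alpha(f)$, combined with the (tautological) monotonicity of the $WL^{1,\kappa}(w)$ quasi-norm under pointwise domination. The paper states this as an immediate consequence without writing out the details, and your write-up supplies precisely those details.
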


\section{Notations and definitions}

The classical $A_p$ weight theory was first introduced by Muckenhoupt in the study of weighted $L^p$ boundedness of Hardy-Littlewood maximal functions in \cite{muckenhoupt1}. A weight $w$ is a nonnegative, locally integrable function on $\mathbb R^n$, $B=B(x_0,r_B)$ denotes the ball with the center $x_0$ and radius $r_B$. Given a ball $B$ and $\lambda>0$, $\lambda B$ denotes the ball with the same center as $B$ whose radius is $\lambda$ times that of $B$. For a given weight function $w$ and a measurable set $E$ in $\mathbb R^n$, we also denote the Lebesgue measure of $E$ by $|E|$ and the weighted measure of $E$ by $w(E)$, where $w(E)=\int_E w(x)\,dx$. We say that $w$ is in the Muckenhoupt class $A_p$ with $1<p<\infty$, if
\begin{equation*}
\left(\frac1{|B|}\int_B w(x)\,dx\right)\left(\frac1{|B|}\int_B w(x)^{-1/{(p-1)}}\,dx\right)^{p-1}\le C \quad\mbox{for every ball}\; B\subseteq \mathbb
R^n,
\end{equation*}
where $C$ is a positive constant which is independent of the choice of $B$.
For the endpoint case $p=1$, $w\in A_1$, if
\begin{equation*}
\frac1{|B|}\int_B w(x)\,dx\le C\cdot\underset{x\in B}{\mbox{ess\,inf}}\,w(x)\quad\mbox{for every ball}\;B\subseteq\mathbb R^n.
\end{equation*}

A weight function $w$ is said to belong to the reverse H\"older class $RH_r$ if there exist two constants $r>1$ and $C>0$ such that the following reverse H\"older inequality
\begin{equation*}
\left(\frac{1}{|B|}\int_B w(x)^r\,dx\right)^{1/r}\le C\left(\frac{1}{|B|}\int_B w(x)\,dx\right)
\end{equation*}
holds for every ball $B$ in $\mathbb R^n$.

It is well known that if $w\in A_p$ with $p=1$, then $w\in A_q$ for all $q>1$. If $w\in A_p$ with $1\le p<\infty$, then there exists $r>1$ such that $w\in RH_r$. We state the following results that will be used in the sequel.
\newtheorem{lemma}[theorem]{Lemma}

\begin{lemma}[\cite{garcia2}]
Let $w\in A_p$ with $1\le p<\infty$. Then, for any ball $B$, there exists an absolute constant $C>0$ such that
\begin{equation*}
w(2B)\le C\,w(B).
\end{equation*}
In general, for any $\lambda>1$, we have
\begin{equation*}
w(\lambda B)\le C\cdot\lambda^{np}w(B),
\end{equation*}
where $C$ does not depend on $B$ nor on $\lambda$.
\end{lemma}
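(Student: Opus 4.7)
The plan is to derive both inequalities directly from the Muckenhoupt condition, noting that the doubling bound $w(2B)\le Cw(B)$ is simply the case $\lambda=2$ of the more general inequality. So I would focus on proving $w(\lambda B)\le C\lambda^{np}w(B)$ for arbitrary $\lambda>1$, handling $p=1$ and $p>1$ separately.

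For the endpoint case $p=1$, I would read the conclusion off directly from the definition of $A_1$. Since $\mathop{\mathrm{ess\,inf}}_{\lambda B}w\le\mathop{\mathrm{ess\,inf}}_{B}w$, applying the $A_1$ inequality on $\lambda B$ and then the trivial bound $\mathop{\mathrm{ess\,inf}}_{B}w\le |B|^{-1}w(B)$ yields
\begin{equation*}
\frac{w(\lambda B)}{|\lambda B|}\le C\cdot\mathop{\mathrm{ess\,inf}}_{x\in\lambda B}w(x)\le C\cdot\mathop{\mathrm{ess\,inf}}_{x\in B}w(x)\le C\cdot\frac{w(B)}{|B|},
\end{equation*}
which rearranges to $w(\lambda B)\le C\lambda^{n}w(B)$, matching the claim with $p=1$.

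For $p>1$, the key step is the standard H\"older-inequality consequence of $A_p$: for any measurable $E\subset B'$,
\begin{equation*}
|E|=\int_{E}w^{1/p}w^{-1/p}\,dx\le w(E)^{1/p}\left(\int_{B'}w(x)^{-1/(p-1)}\,dx\right)^{(p-1)/p}.
\end{equation*}
Applying this with $E=B$ and $B'=\lambda B$, raising to the $p$-th power, and then invoking the $A_p$ condition on $\lambda B$, namely
\begin{equation*}
w(\lambda B)\left(\int_{\lambda B}w(x)^{-1/(p-1)}\,dx\right)^{p-1}\le C|\lambda B|^{p},
\end{equation*}
the factor involving $w^{-1/(p-1)}$ cancels and produces $|B|^p\cdot w(\lambda B)\le C\,w(B)\cdot|\lambda B|^p$, i.e. $w(\lambda B)/w(B)\le C\lambda^{np}$.

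There is no real obstacle here: this is a textbook argument, and the only mild technicality is the need to separate the case $p=1$, where the H\"older step degenerates but the $A_1$ condition itself provides the cleaner direct comparison used above.
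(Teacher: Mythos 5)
Your proof is correct. Note that the paper itself gives no proof of this lemma --- it is quoted from Garc\'ia-Cuerva and Rubio de Francia's book --- so there is nothing to compare against; your argument (the $A_1$ definition applied on $\lambda B$ for $p=1$, and for $p>1$ the H\"older bound $|E|^p\le w(E)\bigl(\int_{B'}w^{-1/(p-1)}\bigr)^{p-1}$ with $E=B$, $B'=\lambda B$ combined with the $A_p$ condition on $\lambda B$) is exactly the standard derivation, with the constant depending only on the $A_p$ constant of $w$ and hence independent of $B$ and $\lambda$ as required.
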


\begin{lemma}[\cite{gundy}]
Let $w\in RH_r$ with $r>1$. Then there exists a constant $C>0$ such that
\begin{equation*}
\frac{w(E)}{w(B)}\le C\left(\frac{|E|}{|B|}\right)^{(r-1)/r}
\end{equation*}
for any measurable subset $E$ of a ball $B$.
\end{lemma}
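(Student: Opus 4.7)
The plan is to deduce the inequality directly from the reverse Hölder property by a single application of the ordinary Hölder inequality. The conjugate exponent to $r$ is $r/(r-1)$, and the exponent $(r-1)/r$ appearing in the target estimate is precisely what arises when one pairs the weight $w$ (put in $L^r$) against the characteristic function $\chi_E$ (put in $L^{r/(r-1)}$) over $B$.

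Concretely, I would first write $w(E)=\int_B \chi_E(x)\,w(x)\,dx$ and apply Hölder's inequality with exponents $r$ and $r/(r-1)$ to obtain
\begin{equation*}
w(E)\le\left(\int_B w(x)^r\,dx\right)^{1/r}|E|^{(r-1)/r}.
\end{equation*}
The second step is to normalize the $L^r$ factor by $|B|$ and invoke the assumption $w\in RH_r$, which gives
\begin{equation*}
\left(\int_B w(x)^r\,dx\right)^{1/r}=|B|^{1/r}\left(\frac1{|B|}\int_B w(x)^r\,dx\right)^{1/r}\le C\,|B|^{1/r}\cdot\frac{w(B)}{|B|}=C\,|B|^{-(r-1)/r}w(B).
\end{equation*}
Combining the two bounds and rearranging yields the desired estimate $w(E)/w(B)\le C(|E|/|B|)^{(r-1)/r}$.

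There is essentially no obstacle here: the only nontrivial ingredient is the reverse Hölder inequality itself, which is built into the hypothesis $w\in RH_r$. The choice of exponents in Hölder is forced by the form of the conclusion, and no decomposition of $E$, covering argument, or interpolation is required. The constant $C$ in the final inequality is simply the $RH_r$ constant of $w$ and is independent of $E$ and $B$.
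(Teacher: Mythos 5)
Your argument is correct and complete: a single application of H\"older's inequality with exponents $r$ and $r/(r-1)$ to $w(E)=\int_B\chi_E\,w$, followed by the reverse H\"older hypothesis to control $\bigl(\int_B w^r\bigr)^{1/r}$ by $|B|^{-(r-1)/r}w(B)$, is exactly the standard proof of this lemma. The paper itself gives no proof, citing the result from the literature, so there is nothing to contrast; your derivation supplies the expected argument with the correct bookkeeping of the powers of $|B|$.
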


\begin{lemma}[\cite{garcia2}]
Let $w\in A_q$ with $q>1$. Then, for all $R>0$, there exists a constant $C>0$ independent of $R$ such that
\begin{equation*}
\int_{|x|\ge R}\frac{w(x)}{|x|^{nq}}\,dx\le C\cdot R^{-nq}w\big(Q(0,2R)\big),
\end{equation*}
where $Q=Q(x_0,\ell)$ denotes the cube centered at $x_0$ with side length $\ell$ and all cubes are
assumed to have their sides parallel to the coordinate axes.
\end{lemma}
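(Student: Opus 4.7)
The plan is to reduce everything to a dyadic decomposition of the region $\{|x|\ge R\}$ into annuli of geometrically increasing size, and then to control the weighted measure of each annulus by $w(Q(0,2R))$ using the scaling properties of $A_q$ weights that are already recorded in Lemma 2.1 and Lemma 2.2.

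First I would write
\begin{equation*}
\int_{|x|\ge R}\frac{w(x)}{|x|^{nq}}\,dx=\sum_{k=0}^{\infty}\int_{2^{k}R\le|x|<2^{k+1}R}\frac{w(x)}{|x|^{nq}}\,dx\le\sum_{k=0}^{\infty}(2^{k}R)^{-nq}\,w\bigl(B(0,2^{k+1}R)\bigr),
\end{equation*}
and then use the trivial inclusion $B(0,2^{k+1}R)\subseteq Q(0,2^{k+2}R)$ to pass from balls to cubes. The whole problem is thereby reduced to obtaining a suitable bound for $w(Q(0,2^{k+2}R))$ in terms of $w(Q(0,2R))$.

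Here I expect the main obstacle. The naive application of Lemma 2.1 with exponent $q$ yields $w(Q(0,2^{k+2}R))\le C\cdot 2^{(k+1)nq}w(Q(0,2R))$, which exactly cancels the factor $(2^{k}R)^{-nq}$ in the series and produces a divergent sum. The decisive step is therefore to exploit the self-improving (``openness'') property of the $A_q$ class: if $w\in A_q$ with $q>1$, then there exists $\varepsilon\in(0,q-1)$ such that $w\in A_{q-\varepsilon}$. This is exactly the content underlying Lemma 2.2 via the reverse H\"older inequality. Applying Lemma 2.1 with the improved exponent $q-\varepsilon$ upgrades the doubling estimate to
\begin{equation*}
w\bigl(Q(0,2^{k+2}R)\bigr)\le C\cdot 2^{(k+1)n(q-\varepsilon)}w\bigl(Q(0,2R)\bigr),
\end{equation*}
gaining a crucial factor $2^{-kn\varepsilon}$ over the naive bound.

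Plugging this into the series and summing a geometric progression gives
\begin{equation*}
\sum_{k=0}^{\infty}(2^{k}R)^{-nq}\cdot C\cdot 2^{(k+1)n(q-\varepsilon)}w\bigl(Q(0,2R)\bigr)\le C'\,R^{-nq}w\bigl(Q(0,2R)\bigr)\sum_{k=0}^{\infty}2^{-kn\varepsilon}\le C''\,R^{-nq}w\bigl(Q(0,2R)\bigr),
\end{equation*}
which is the claimed inequality. Once the exponent gap $\varepsilon>0$ is secured, the remainder is a one-line geometric-series calculation; the real content of the lemma lies in the openness of the Muckenhoupt condition, and that is the step I would be most careful to cite correctly.
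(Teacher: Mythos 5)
The paper does not actually prove this lemma: it is quoted verbatim from Garc\'{\i}a-Cuerva and Rubio de Francia \cite{garcia2}, so there is no in-paper argument to compare against. Your proof is correct and is in fact the standard one for this estimate: dyadic annuli, the trivial bound $|x|^{-nq}\le (2^kR)^{-nq}$ on each annulus, and then a doubling estimate with an exponent \emph{strictly below} $nq$ so that the resulting geometric series converges. You correctly identified the one genuinely nontrivial point, namely that the naive exponent $nq$ from Lemma 2.1 is borderline divergent and that the openness property $w\in A_q\Rightarrow w\in A_{q-\varepsilon}$ is what saves the argument. Two small remarks. First, your attribution of the openness property to ``the content underlying Lemma 2.2'' is slightly off: Lemma 2.2 bounds $w(E)/w(B)$ from above for $E\subseteq B$, which here would only give a lower bound on $w(Q(0,2^{k+2}R))$ in terms of $w(Q(0,2R))$ --- the wrong direction. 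The openness of $A_q$ instead follows from the reverse H\"older inequality applied to the dual weight $w^{-1/(q-1)}\in A_{q'}$; it is a standard theorem, but it is a different consequence of reverse H\"older than Lemma 2.2, and you should cite it as such. Second, Lemma 2.1 is stated for balls while the conclusion involves cubes; this is harmless (sandwich each cube between two concentric balls of comparable radius, or note that the doubling estimate holds verbatim for cubes), but it deserves a sentence.
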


Given a weight function $w$ on $\mathbb R^n$, for $1\le p<\infty$, we denote by $L^p_w(\mathbb R^n)$ the weighted space of all functions $f$ satisfying
\begin{equation*}
\|f\|_{L^p_w}=\bigg(\int_{\mathbb R^n}|f(x)|^pw(x)\,dx\bigg)^{1/p}<\infty.
\end{equation*}

\newtheorem{defn}[theorem]{Definition}

\begin{defn}[\cite{komori}]
Let $1\le p<\infty$, $0<\kappa<1$ and $w$ be a weight function. Then the weighted Morrey space is defined by
\begin{equation*}
L^{p,\kappa}(w)=\big\{f\in L^p_{loc}(w):\|f\|_{L^{p,\kappa}(w)}<\infty\big\},
\end{equation*}
where
\begin{equation*}
\|f\|_{L^{p,\kappa}(w)}=\sup_B\left(\frac{1}{w(B)^\kappa}\int_B|f(x)|^pw(x)\,dx\right)^{1/p}
\end{equation*}
and the supremum is taken over all balls $B$ in $\mathbb R^n$.
\end{defn}

We also denote by $WL^{p,\kappa}(w)$ the weighted weak Morrey spaces of all measurable functions $f$ satisfying
\begin{equation*}
\|f\|_{WL^{p,\kappa}(w)}=\sup_B\sup_{\lambda>0}\frac{1}{w(B)^{\kappa/p}}\lambda\cdot w\big(\big\{x\in B:|f(x)|>\lambda\big\}\big)^{1/p}<\infty.
\end{equation*}

Throughout this paper, the letter $C$ always denote a positive constant independent of the main parameters involved, but it may be different from line to line.

\section{Proof of Theorem 1.1}

First we note that if $w\in A_1$, then $M(w)(x)\le C\cdot w(x)$ for a.e. $x\in\mathbb R^n$ by the definition of $A_1$ weights. Hence, as a straightforward consequence of Theorem B, we obtain

\begin{theorem}
Let $0<\alpha\le1$ and $w\in A_1$. Then for any given $\lambda>0$, there exists a constant $C>0$ independent of $f$ and $\lambda$ such that
\begin{equation*}
w\big(\big\{x\in\mathbb R^n:|\mathcal S_\alpha(f)(x)|>\lambda\big\}\big)\le\frac{C}{\lambda}\int_{\mathbb R^n}|f(x)|w(x)\,dx.
\end{equation*}
\end{theorem}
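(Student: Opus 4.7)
The proof is essentially a one-line deduction from Theorem B, so the plan is to make explicit the chain of inequalities that combines Theorem B with the defining property of $A_1$ weights.

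First I would recall from the definition of the Muckenhoupt class $A_1$ that
\begin{equation*}
\frac{1}{|B|}\int_B w(y)\,dy \le C\cdot\operatorname*{ess\,inf}_{x\in B} w(x)
\end{equation*}
for every ball $B$. Taking the supremum over balls centered at $x$ on the left-hand side and observing that the essential infimum on the right is bounded above by $w(x)$ at a Lebesgue point shows that, for $w\in A_1$, the pointwise bound $Mw(x)\le C\, w(x)$ holds for almost every $x\in\mathbb R^n$, with $C$ depending only on the $A_1$ characteristic of $w$.

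Next I would apply Theorem B, which requires no restriction on the weight $w$. It gives, for every $\lambda>0$,
\begin{equation*}
w\bigl(\bigl\{x\in\mathbb R^n:\mathcal S_\alpha(f)(x)>\lambda\bigr\}\bigr)\le\frac{C}{\lambda}\int_{\mathbb R^n}|f(x)|\,Mw(x)\,dx.
\end{equation*}
Substituting the pointwise estimate $Mw(x)\le C\,w(x)$ derived in the previous step into the right-hand side immediately yields
\begin{equation*}
w\bigl(\bigl\{x\in\mathbb R^n:\mathcal S_\alpha(f)(x)>\lambda\bigr\}\bigr)\le\frac{C}{\lambda}\int_{\mathbb R^n}|f(x)|\,w(x)\,dx,
\end{equation*}
which is the desired weighted weak-type $(1,1)$ estimate.

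There is essentially no obstacle here; the argument is purely a matter of inserting one pointwise inequality into another. The only point to be careful about is to note explicitly that the constant $C$ now absorbs the $A_1$ constant of $w$ but remains independent of both $f$ and $\lambda$, as required by the statement.
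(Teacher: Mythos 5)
Your argument is correct and is exactly the paper's own reasoning: the paper derives Theorem 3.1 as an immediate consequence of Theorem B by noting that $w\in A_1$ implies $Mw(x)\le C\,w(x)$ almost everywhere. Nothing further is needed.
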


We are now ready to prove Theorem 1.1.

\begin{proof}[Proof of Theorem 1.1]
Let $f\in L^{1,\kappa}(w)$. Fix a ball $B=B(x_0,r_B)\subseteq\mathbb R^n$ and decompose $f=f_1+f_2$, where $f_1=f\chi_{_{2B}}$, $\chi_{_{2B}}$ denotes the characteristic function of $2B$. Since $\mathcal S_\alpha$ ($0<\alpha\le1$) is a sublinear operator, then for any given $\lambda>0$, we write
\begin{equation*}
\begin{split}
&w\big(\big\{x\in B:|\mathcal S_{\alpha}(f)(x)|>\lambda\big\}\big)\\
\le\,& w\big(\big\{x\in B:|\mathcal S_{\alpha}(f_1)(x)|>\lambda/2\big\}\big)+w\big(\big\{x\in B:|\mathcal S_{\alpha}(f_2)(x)|>\lambda/2\big\}\big)\\
  =\,&I_1+I_2.
\end{split}
\end{equation*}
Lemma 2.1 and Theorem 3.1 yield
\begin{equation*}
\begin{split}
I_1&\le\frac{C}{\lambda}\int_{2B}|f(y)|w(y)\,dy\\
&\le\frac{C\cdot w(2B)^\kappa}{\lambda}\|f\|_{L^{1,\kappa}(w)}\\
&\le\frac{C\cdot w(B)^\kappa}{\lambda}\|f\|_{L^{1,\kappa}(w)}.
\end{split}
\end{equation*}
We now turn to estimate the other term $I_2$. For any $\varphi\in{\mathcal C}_\alpha$, $0<\alpha\le1$ and $(y,t)\in\Gamma(x)$, we have
\begin{align}
\big|f_2*\varphi_t(y)\big|&=\bigg|\int_{(2B)^c}\varphi_t(y-z)f(z)\,dz\bigg|\notag\\
&\le C\cdot t^{-n}\int_{(2B)^c\cap\{z:|y-z|\le t\}}|f(z)|\,dz\notag\\
&\le C\cdot t^{-n}\sum_{j=1}^\infty\int_{(2^{j+1}B\backslash 2^{j}B)\cap\{z:|y-z|\le t\}}|f(z)|\,dz.
\end{align}
For any $x\in B$, $(y,t)\in\Gamma(x)$ and $z\in\big(2^{j+1}B\backslash 2^{j}B\big)\cap B(y,t)$, then by a direct computation, we can easily see that
\begin{equation*}
2t\ge |x-y|+|y-z|\ge|x-z|\ge|z-x_0|-|x-x_0|\ge 2^{j-1}r_B.
\end{equation*}
Thus, by using the above inequality (1) and Minkowski's inequality, we deduce
\begin{align}
\big|\mathcal S_\alpha(f_2)(x)\big|&=\left(\iint_{\Gamma(x)}\Big(\sup_{\varphi\in{\mathcal C}_\alpha}\big|f_2*\varphi_t(y)\big|\Big)^2\frac{dydt}{t^{n+1}}\right)^{1/2}\notag\\
&\le C\left(\int_{2^{j-2}r_B}^\infty\int_{|x-y|<t}\bigg|t^{-n}\sum_{j=1}^\infty\int_{2^{j+1}B\backslash 2^{j}B}|f(z)|\,dz\bigg|^2\frac{dydt}{t^{n+1}}\right)^{1/2}\notag\\
&\le C\bigg(\sum_{j=1}^\infty\int_{2^{j+1}B\backslash 2^{j}B}|f(z)|\,dz\bigg)\left(\int_{2^{j-2}r_B}^\infty\frac{dt}{t^{2n+1}}\right)^{1/2}\notag\\
&\le C\sum_{j=1}^\infty\frac{1}{|2^{j+1}B|}\int_{2^{j+1}B}|f(z)|\,dz.
\end{align}
It follows directly from the $A_1$ condition that
\begin{align}
\sum_{j=1}^\infty\frac{1}{|2^{j+1}B|}\int_{2^{j+1}B}|f(z)|\,dz
&\le C\sum_{j=1}^\infty\frac{1}{w(2^{j+1}B)}\underset{z\in 2^{j+1}B}{\mbox{ess\,inf}}\,w(z)\int_{2^{j+1}B}|f(z)|\,dz\notag\\
&\le C\sum_{j=1}^\infty\frac{1}{w(2^{j+1}B)}\int_{2^{j+1}B}|f(z)|w(z)\,dz\notag\\
&\le C\|f\|_{L^{1,\kappa}(w)}\cdot\frac{1}{w(B)^{1-\kappa}}
\sum_{j=1}^\infty\frac{w(B)^{1-\kappa}}{w(2^{j+1}B)^{1-\kappa}}.
\end{align}
Since $w\in A_1$, then there exists a number $r>1$ such that $w\in RH_r$. Consequently, by Lemma 2.2, we obtain
\begin{equation}
\frac{w(B)}{w(2^{j+1}B)}\le C\left(\frac{|B|}{|2^{j+1}B|}\right)^{(r-1)/{r}}.
\end{equation}
Hence, for any $x\in B$,
\begin{align}
\big|\mathcal S_\alpha(f_2)(x)\big|&\le C\|f\|_{L^{1,\kappa}(w)}\cdot\frac{1}{w(B)^{1-\kappa}}
\sum_{j=1}^\infty\left(\frac{1}{2^{jn}}\right)^{(1-\kappa)(r-1)/{r}}\notag\\
&\le C\|f\|_{L^{1,\kappa}(w)}\cdot\frac{1}{w(B)^{1-\kappa}},
\end{align}
where in the last inequality we have used the fact that $(1-\kappa)(r-1)/{r}>0$. If $\big\{x\in B:|\mathcal S_{\alpha}(f_2)(x)|>\lambda/2\big\}=\O$, then the inequality
\begin{equation*}
I_2\le \frac{C\cdot w(B)^\kappa}{\lambda}\|f\|_{L^{1,\kappa}(w)}
\end{equation*}
holds trivially. Now we may suppose that $\big\{x\in B:|\mathcal S_{\alpha}(f_2)(x)|>\lambda/2\big\}\neq\O$, then by the pointwise inequality (5), we have
\begin{equation*}
\lambda\le C\|f\|_{L^{1,\kappa}(w)}\cdot\frac{1}{w(B)^{1-\kappa}},
\end{equation*}
which is equivalent to
$$w(B)\le\frac{C\cdot w(B)^\kappa}{\lambda}\|f\|_{L^{1,\kappa}(w)}.$$
Therefore
\begin{equation*}
I_2\le w(B)\le\frac{C\cdot w(B)^\kappa}{\lambda}\|f\|_{L^{1,\kappa}(w)}.
\end{equation*}
Summing up the above estimates for $I_1$ and $I_2$, and then taking the supremum over all balls $B\subseteq\mathbb R^n$ and all $\lambda>0$, we complete the proof of Theorem 1.1.
\end{proof}

\section{Proof of Theorem 1.2}

Before proving the main theorem in this section, let us first establish the following results.
\begin{lemma}
Let $0<\alpha\le1$ and $w\in A_1$. Then for any $j\in\mathbb Z_+$, we have
\begin{equation*}
\big\|\mathcal S_{\alpha,2^j}(f)\big\|_{L^2_w}\le C\cdot2^{{jn}/2}\big\|\mathcal S_\alpha(f)\big\|_{L^2_w}.
\end{equation*}
\end{lemma}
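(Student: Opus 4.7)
The plan is to rewrite both weighted $L^2$ norms by Fubini's theorem as integrals over the upper half-space ${\mathbb R}^{n+1}_+$ of the same integrand $(A_\alpha(f)(y,t))^2 t^{-(n+1)}$ weighted by a ball volume, and then invoke the doubling property of $A_1$ weights supplied by Lemma 2.1 to compare the two.

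First I would exchange the order of integration in the expression
\[
\|\mathcal S_{\alpha,\beta}(f)\|_{L^2_w}^2=\int_{\mathbb R^n}\iint_{\Gamma_\beta(x)}\bigl(A_\alpha(f)(y,t)\bigr)^2\frac{dydt}{t^{n+1}}\,w(x)\,dx.
\]
Observing that the condition $(y,t)\in\Gamma_\beta(x)$ is precisely $x\in B(y,\beta t)$, and that the integrand is nonnegative so Fubini applies without any integrability worry, this yields the uniform identity
\[
\|\mathcal S_{\alpha,\beta}(f)\|_{L^2_w}^2=\iint_{{\mathbb R}^{n+1}_+}\bigl(A_\alpha(f)(y,t)\bigr)^2\,\frac{w(B(y,\beta t))}{t^{n+1}}\,dydt,
\]
valid for every aperture $\beta>0$, in particular for $\beta=1$ and $\beta=2^j$.

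Next, since $w\in A_1$, Lemma 2.1 with $p=1$ yields the pointwise doubling estimate $w(B(y,2^jt))\le C\cdot 2^{jn}\,w(B(y,t))$ for every $(y,t)\in{\mathbb R}^{n+1}_+$ and every $j\in\mathbb Z_+$. Substituting this bound into the integral representation with $\beta=2^j$ and comparing it against the one with $\beta=1$ immediately produces $\|\mathcal S_{\alpha,2^j}(f)\|_{L^2_w}^2\le C\cdot 2^{jn}\,\|\mathcal S_\alpha(f)\|_{L^2_w}^2$, and taking square roots gives the claim.

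There is no real obstacle here: the argument is a direct Fubini-plus-doubling computation, and the exponent $2^{jn/2}$ on the right-hand side is dictated precisely by the $A_1$-doubling dimension $n$ matched against the $L^2$ exponent $1/2$. The only item to mind is the legitimacy of the Fubini exchange, which is automatic from the nonnegativity and measurability of the integrand.
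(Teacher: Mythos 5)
Your proposal is correct and follows exactly the paper's own argument: interchange the order of integration so that each squared norm becomes $\iint_{{\mathbb R}^{n+1}_+}(A_\alpha(f)(y,t))^2\,w(B(y,\beta t))\,t^{-(n+1)}\,dydt$, then apply the doubling estimate $w(B(y,2^jt))\le C\cdot 2^{jn}w(B(y,t))$ from Lemma 2.1. There is nothing to add.
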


\begin{proof}
Since $w\in A_1$, then by Lemma 2.1, we know that for any $(y,t)\in{\mathbb R}^{n+1}_+$,
\begin{equation*}
w\big(B(y,2^jt)\big)=w\big(2^jB(y,t)\big)\le C\cdot2^{jn}w\big(B(y,t)\big)\quad j=1,2,\ldots.
\end{equation*}
Therefore
\begin{equation*}
\begin{split}
\big\|\mathcal S_{\alpha,2^j}(f)\big\|_{L^2_w}^2&=\int_{\mathbb R^n}\bigg(\iint_{{\mathbb R}^{n+1}_+}\Big(A_\alpha(f)(y,t)\Big)^2\chi_{|x-y|<2^j t}\frac{dydt}{t^{n+1}}\bigg)w(x)\,dx\\
&=\iint_{{\mathbb R}^{n+1}_+}\Big(\int_{|x-y|<2^j t}w(x)\,dx\Big)\Big(A_\alpha(f)(y,t)\Big)^2\frac{dydt}{t^{n+1}}\\
&\le C\cdot2^{jn}\iint_{{\mathbb R}^{n+1}_+}\Big(\int_{|x-y|<t}w(x)\,dx\Big)\Big(A_\alpha(f)(y,t)\Big)^2\frac{dydt}{t^{n+1}}\\
&=C\cdot 2^{jn}\big\|\mathcal S_\alpha(f)\big\|_{L^2_w}^2.
\end{split}
\end{equation*}
Taking square-roots on both sides of the above inequality, we are done.
\end{proof}

\begin{theorem}
Let $0<\alpha\le1$, $w\in A_1$ and $\lambda>{(3n+2\alpha)}/n$. Then for any given $\sigma>0$, there exists a constant $C>0$ independent of $f$ and $\sigma$ such that
\begin{equation*}
w\Big(\Big\{x\in\mathbb R^n:\big|g^*_{\lambda,\alpha}(f)(x)\big|>\sigma\Big\}\Big)\le\frac{C}{\sigma}\int_{\mathbb R^n}|f(x)|w(x)\,dx.
\end{equation*}
\end{theorem}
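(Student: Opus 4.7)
The plan is a weighted Calder\'on--Zygmund argument. Given $\sigma>0$, apply the usual CZ decomposition of $|f|$ at height $\sigma$ to obtain pairwise disjoint cubes $\{Q_k\}$ with $\sigma<|Q_k|^{-1}\int_{Q_k}|f|\le 2^n\sigma$, and write $f=g+b$ with $b=\sum_k b_k$, $b_k=(f-f_{Q_k})\chi_{Q_k}$, and $f_{Q_k}=|Q_k|^{-1}\int_{Q_k}f$. Setting $\Omega^*=\bigcup_k 2^{n+3}Q_k$, the $A_1$-bound $w(Q_k)\le C|Q_k|\,\mathrm{essinf}_{Q_k}w$ combined with $|Q_k|\le\sigma^{-1}\int_{Q_k}|f|$ gives $w(\Omega^*)\le C\sigma^{-1}\|f\|_{L^1_w}$. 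By sublinearity of $g^*_{\lambda,\alpha}$, it then suffices to estimate $w\bigl(\{g^*_{\lambda,\alpha}(g)>\sigma/2\}\bigr)$ and $w\bigl(\{g^*_{\lambda,\alpha}(b)>\sigma/2\}\cap(\Omega^*)^c\bigr)$.

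For the good part, I would first upgrade Lemma~4.1 to an $L^2_w$-boundedness statement for $g^*_{\lambda,\alpha}$ itself, by decomposing $\mathbb R^{n+1}_+$ into the cone $\Gamma(x)$ and the annular regions $\{2^{j-1}t\le|x-y|<2^j t\}$, $j\ge 1$. Since $(t/(t+|x-y|))^{\lambda n}\le C\cdot 2^{-j\lambda n}$ on the $j$th annulus, one obtains
\[
g^*_{\lambda,\alpha}(f)(x)^2\le C\,\mathcal S_\alpha(f)(x)^2+C\sum_{j\ge 1}2^{-j\lambda n}\,\mathcal S_{\alpha,2^j}(f)(x)^2.
\]
Integrating against $w$ and invoking Lemma~4.1 and Theorem~A with $p=2$, the hypothesis $\lambda>(3n+2\alpha)/n>1$ ensures convergence of the resulting geometric series and gives $\|g^*_{\lambda,\alpha}(f)\|_{L^2_w}\le C\|f\|_{L^2_w}$. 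The CZ construction yields $|g|\le 2^n\sigma$ a.e.\ and, via $A_1$ applied cube by cube, $\|g\|_{L^1_w}\le C\|f\|_{L^1_w}$; thus $\|g\|_{L^2_w}^2\le C\sigma\|f\|_{L^1_w}$, and Chebyshev's inequality finishes the good part.

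For the bad part the central ingredient is the pointwise decay
\[
g^*_{\lambda,\alpha}(b_k)(x)\le C\,\frac{\ell(Q_k)^\alpha\,\|b_k\|_1}{|x-c_k|^{n+\alpha}},\qquad x\notin 2^{n+3}Q_k,
\]
where $c_k$ is the center of $Q_k$. To derive it, the cancellation $\int b_k=0$ combined with the H\"older condition on $\varphi\in\mathcal C_\alpha$ produces the uniform bound $|b_k*\varphi_t(y)|\le t^{-n-\alpha}\ell(Q_k)^\alpha\|b_k\|_1$, together with the support restriction $|y-c_k|\le t+\sqrt{n}\,\ell(Q_k)/2$. Substituting into $g^*_{\lambda,\alpha}(b_k)(x)^2$ and splitting the $(y,t)$-integration into the three natural regimes $t\le\ell(Q_k)$, $\ell(Q_k)<t\le|x-c_k|/4$, and $t>|x-c_k|/4$ reduces everything to elementary one-variable integrals. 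The main obstacle is precisely this computation: in the small-$t$ regime the $y$-support localizes to a ball of radius $\sim\ell(Q_k)$ and $(t/(t+|x-y|))^{\lambda n}\approx(t/|x-c_k|)^{\lambda n}$, producing an integral of the form $\int_0^{\ell(Q_k)}t^{\lambda n-3n-2\alpha-1}\,dt$, which converges at $0$ exactly when $\lambda>(3n+2\alpha)/n$---this is where the hypothesis is forced. Once the pointwise bound is in hand, Lemma~2.3 applied with $nq=n+\alpha$ (valid since $A_1\subset A_{1+\alpha/n}$) followed by one final use of $A_1$ yields $\int_{(\Omega^*)^c}g^*_{\lambda,\alpha}(b_k)\,w\,dx\le C\int_{Q_k}|f|\,w\,dx$; summing over $k$ and applying Chebyshev completes the proof.
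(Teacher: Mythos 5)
Your proposal is correct, and it reorganizes the argument in a genuinely different way from the paper. The paper first uses the pointwise bound $g^*_{\lambda,\alpha}(f)^2\le C\big(\mathcal S_\alpha(f)^2+\sum_j 2^{-j\lambda n}\mathcal S_{\alpha,2^j}(f)^2\big)$ to split the level set into a cone term and a wide-aperture term; the cone term is disposed of by the known weighted weak $(1,1)$ inequality for $\mathcal S_\alpha$ (Theorem 3.1), and the Calder\'on--Zygmund decomposition is applied only to the remaining sum. For the bad part the paper proves, for each fixed aperture, the decay $\mathcal S_{\alpha,2^j}(b_i)(x)\le C\,2^{j(3n+2\alpha)/2}\ell(Q_i)^\alpha|x-c_i|^{-n-\alpha}\int_{Q_i}|f|$, so the hypothesis $\lambda>(3n+2\alpha)/n$ appears as the convergence condition for the geometric series $\sum_j 2^{-j\lambda n/2}2^{j(3n+2\alpha)/2}$. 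You instead run a single CZ decomposition against the full operator: for the good function you upgrade the same dyadic-annulus estimate to $L^2_w$-boundedness of $g^*_{\lambda,\alpha}$ itself (which only needs $\lambda>1$, exactly as in the paper's treatment of the term III), and for the bad function you prove the kernel decay $g^*_{\lambda,\alpha}(b_k)(x)\le C\ell(Q_k)^\alpha\|b_k\|_1|x-c_k|^{-n-\alpha}$ directly by splitting the $(y,t)$-integral into three regimes; the hypothesis then surfaces as integrability of $t^{\lambda n-3n-2\alpha-1}$ near $t=0$. I checked the three regimes: the small-$t$ regime gives $\|b_k\|_1^2\ell(Q_k)^{\lambda n-2n}|x-c_k|^{-\lambda n}$, which is dominated by the target since $\lambda n>2n+2\alpha$ and $\ell(Q_k)\lesssim|x-c_k|$, and the other two regimes give the target exactly; the final application of Lemma 2.3 with $nq=n+\alpha$ and the $A_1$ condition is the same as in the paper. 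Your version is more self-contained (it never invokes the weak $(1,1)$ bound for $\mathcal S_\alpha$) and makes the origin of the threshold $(3n+2\alpha)/n$ more transparent, at the cost of a slightly longer direct integral computation; the paper's version reuses existing machinery and keeps each individual step shorter. The two dyadic/direct computations are ultimately equivalent and yield the same threshold.
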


\begin{proof}
First, from the definition of $g^*_{\lambda,\alpha}$, we readily see that
\begin{equation*}
\begin{split}
g^*_{\lambda,\alpha}(f)(x)^2=&\iint_{\mathbb R^{n+1}_+}\left(\frac{t}{t+|x-y|}\right)^{\lambda n}\Big(A_\alpha(f)(y,t)\Big)^2\frac{dydt}{t^{n+1}}\notag\\
=&\int_0^\infty\int_{|x-y|<t}\left(\frac{t}{t+|x-y|}\right)^{\lambda n}\Big(A_\alpha(f)(y,t)\Big)^2\frac{dydt}{t^{n+1}}\notag\\
\end{split}
\end{equation*}
\begin{align}
&+\sum_{j=1}^\infty\int_0^\infty\int_{2^{j-1}t\le|x-y|<2^jt}\left(\frac{t}{t+|x-y|}\right)^{\lambda n}\Big(A_\alpha(f)(y,t)\Big)^2\frac{dydt}{t^{n+1}}\notag\\
\le&\, C\bigg[\mathcal S_\alpha(f)(x)^2+\sum_{j=1}^\infty 2^{-j\lambda n}\mathcal S_{\alpha,2^j}(f)(x)^2\bigg].
\end{align}
Then for any given $\sigma>0$, it follows from the above inequality (6) that
\begin{equation*}
\begin{split}
&w\Big(\Big\{x\in \mathbb R^n:\big|g^*_{\lambda,\alpha}(f)(x)\big|>\sigma\Big\}\Big)\\
\le&\,w\Big(\Big\{x\in \mathbb R^n:|\mathcal S_{\alpha}(f)(x)|>\sigma/2\Big\}\Big)+w\Big(\Big\{x\in \mathbb R^n:\Big|\sum_{j=1}^\infty 2^{-j\lambda n/2}\mathcal S_{\alpha,2^j}(f)(x)\Big|>\sigma/2\Big\}\Big)\\
=&\,\mbox{\upshape I+II}.
\end{split}
\end{equation*}
Using Theorem 3.1, we can get
\begin{equation*}
\mbox{I}\le\frac{C}{\sigma}\int_{\mathbb R^n}|f(x)|w(x)\,dx.
\end{equation*}
In order to estimate the term II, for any fixed $\sigma>0$, we apply the Calder\'on-Zygmund decomposition of $f$ at height $\sigma$ to obtain a sequence of disjoint non-overlapping dyadic cubes $\{Q_i\}$ such that the following property hold (see \cite{stein})
\begin{equation}
\sigma<\frac{1}{|Q_i|}\int_{Q_i}|f(y)|\,dy< 2^n\sigma.
\end{equation}
Setting $E=\bigcup_i Q_i$. Now we define two functions $g$ and $b$ as follows:
\begin{equation*}
g(x)=
\begin{cases}
f(x) &  \mbox{if}\;\; x\in E^c,\\
\frac{1}{|Q_i|}\int_{Q_i}|f(y)|\,dy    &  \mbox{if}\;\; x\in Q_i,
\end{cases}
\end{equation*}
and
\begin{equation*}
b(x)=f(x)-g(x)=\sum_i b_i(x),
\end{equation*}
where $b_i(x)=b(x)\chi_{Q_i}(x)$. Then we have
\begin{equation}
|g(x)|\le C\cdot\sigma, \quad \mbox{a.e. }\, x\in\mathbb R^n
\end{equation}
and
\begin{equation}
f(x)=g(x)+b(x).
\end{equation}
Obviously, $supp\,b_i\subseteq Q_i$, $\int_{Q_i}b_i(x)\,dx=0$ and $\|b_i\|_{L^1}\le 2\int_{Q_i}|f(x)|\,dx$ by our construction. Now for $j=1,2,\ldots$, since $\mathcal S_{\alpha,2^j}(f)(x)\le \mathcal S_{\alpha,2^j}(g)(x)+\mathcal S_{\alpha,2^j}(b)(x)$ by (9), then it follows that
\begin{equation*}
\begin{split}
\mbox{II}\le&\, w\Big(\Big\{x\in \mathbb R^n:\Big|\sum_{j=1}^\infty 2^{-j\lambda n/2}\mathcal S_{\alpha,2^j}(g)(x)\Big|>\sigma/4\Big\}\Big)\\
&+w\Big(\Big\{x\in \mathbb R^n:\Big|\sum_{j=1}^\infty 2^{-j\lambda n/2}\mathcal S_{\alpha,2^j}(b)(x)\Big|>\sigma/4\Big\}\Big)\\
=&\,\mbox{\upshape III+IV}.
\end{split}
\end{equation*}
Observe that $w\in A_1\subset A_2$ and $\lambda>1$. Applying Chebyshev's inequality, Minkowski's inequality, Lemma 4.1 and Theorem A, we obtain
\begin{equation*}
\begin{split}
\mbox{\upshape III}&\le\frac{C}{\sigma^2}\bigg\|\sum_{j=1}^\infty 2^{-j\lambda n/2}\mathcal S_{\alpha,2^j}(g)\bigg\|^2_{L^2_w}\\
&\le\frac{C}{\sigma^2}\Bigg(\sum_{j=1}^\infty 2^{-j\lambda n/2}\cdot 2^{jn/2}\big\|\mathcal S_\alpha(g)\big\|_{L^2_w}\Bigg)^2\\
&\le\frac{C}{\sigma^2}\Bigg(\sum_{j=1}^\infty 2^{-j\lambda n/2}\cdot 2^{jn/2}\|g\|_{L^2_w}\Bigg)^2\\
&\le\frac{C}{\sigma^2}\cdot\big\|g\big\|^2_{L^2_w}.
\end{split}
\end{equation*}
Moreover, by the inequality (8) and the $A_1$ condition, we deduce that
\begin{equation*}
\begin{split}
\big\|g\big\|^2_{L^2_w}&\le C\cdot\sigma\int_{\mathbb R^n}|g(x)|w(x)\,dx\\
&\le C\cdot\sigma\left(\int_{E^c}|f(x)|w(x)\,dx+\int_{\bigcup_i Q_i}|g(x)|w(x)\,dx\right)\\
&\le C\cdot\sigma\left(\int_{\mathbb R^n}|f(x)|w(x)\,dx+\sum_i\frac{w(Q_i)}{|Q_i|}\int_{Q_i}|f(y)|\,dy\right)\\
&\le C\cdot\sigma\left(\int_{\mathbb R^n}|f(x)|w(x)\,dx+\sum_i\underset{y\in Q_i}{\mbox{ess\,inf}}\,w(y)\int_{Q_i}|f(y)|\,dy\right)\\
&\le C\cdot\sigma\left(\int_{\mathbb R^n}|f(x)|w(x)\,dx+\int_{\bigcup_i Q_i}|f(y)|w(y)\,dy\right)\\
&\le C\cdot\sigma\int_{\mathbb R^n}|f(x)|w(x)\,dx.
\end{split}
\end{equation*}
So we have
\begin{equation*}
\mbox{III}\le\frac{C}{\sigma}\int_{\mathbb R^n}|f(x)|w(x)\,dx.
\end{equation*}
To deal with the last term IV, let $Q_i^*=2\sqrt n Q_i$ be the cube concentric with $Q_i$ such that $\ell(Q_i^*)=(2\sqrt n)\ell(Q_i)$. Then we can further decompose IV as follows.
\begin{equation*}
\begin{split}
\mbox{\upshape IV}\le&\,w\Big(\Big\{x\in \bigcup_i Q_i^*:\Big|\sum_{j=1}^\infty 2^{-j\lambda n/2}\mathcal S_{\alpha,2^j}(b)(x)\Big|>\sigma/4\Big\}\Big)\\
&+w\Big(\Big\{x\notin \bigcup_i Q_i^*:\Big|\sum_{j=1}^\infty 2^{-j\lambda n/2}\mathcal S_{\alpha,2^j}(b)(x)\Big|>\sigma/4\Big\}\Big)\\
=&\,\mbox{\upshape IV}^{(1)}+\mbox{\upshape IV}^{(2)}.
\end{split}
\end{equation*}
Since $w\in A_1$, then by Lemma 2.1, we can get
\begin{equation*}
\mbox{\upshape IV}^{(1)}\le\sum_i w\big(Q_i^*\big)\le C\sum_i w(Q_i).
\end{equation*}
Furthermore, it follows from the inequality (7) and the $A_1$ condition that
\begin{equation*}
\begin{split}
\mbox{\upshape IV}^{(1)}&\le C\sum_i\frac{1}{\sigma}\cdot\underset{y\in Q_i}{\mbox{ess\,inf}}\,w(y)\int_{Q_i}|f(y)|\,dy\\
&\le \frac{C}{\sigma}\sum_i\int_{Q_i}|f(y)|w(y)\,dy\\
&\le \frac{C}{\sigma}\int_{\bigcup_i Q_i}|f(y)|w(y)\,dy\\
&\le \frac{C}{\sigma}\int_{\mathbb R^n}|f(y)|w(y)\,dy.
\end{split}
\end{equation*}
Thus, in order to finish our proof, we need only to prove that
\begin{equation*}
\mbox{\upshape IV}^{(2)}\le\frac{C}{\sigma}\int_{\mathbb R^n}|f(x)|w(x)\,dx.
\end{equation*}
Denote the center of $Q_i$ by $c_i$. For any $\varphi\in{\mathcal C}_\alpha$, $0<\alpha\le1$, by the cancellation condition of $b_i$, we have that for any $(y,t)\in\Gamma_{2^j}(x)$,
\begin{align}
\big|(b_i*\varphi_t)(y)\big|&=\left|\int_{Q_i}\big[\varphi_t(y-z)-\varphi_t(y-c_i)\big]b_i(z)\,dz\right|\notag\\
&\le\int_{Q_i\cap\{z:|z-y|\le t\}}\frac{|z-c_i|^\alpha}{t^{n+\alpha}}|b_i(z)|\,dz\notag\\
&\le C\cdot\frac{\ell(Q_i)^{\alpha}}{t^{n+\alpha}}\int_{Q_i\cap\{z:|z-y|\le t\}}|b_i(z)|\,dz.
\end{align}
In addition, for any $z\in Q_i$ and $x\in (Q^*_i)^c$, we have $|z-c_i|<\frac{|x-c_i|}{2}$. Thus, for all $(y,t)\in\Gamma_{2^j}(x)$ and $|z-y|\le t$ with $z\in Q_i$, we can deduce that
\begin{equation}
t+2^jt\ge|x-y|+|y-z|\ge|x-z|\ge|x-c_i|-|z-c_i|\ge\frac{|x-c_i|}{2}.
\end{equation}
Hence, for any $x\in (Q^*_i)^c$, by using the above inequalities (10) and (11), we obtain
\begin{equation*}
\begin{split}
\big|\mathcal S_{\alpha,2^j}(b_i)(x)\big|&=\left(\iint_{\Gamma_{2^j}(x)}\Big(\sup_{\varphi\in{\mathcal C}_\alpha}\big|(\varphi_t*{b_i})(y)\big|\Big)^2\frac{dydt}{t^{n+1}}\right)^{1/2}\\
&\le C\cdot\ell(Q_i)^{\alpha}\bigg(\int_{Q_i}|b_i(z)|\,dz\bigg)\left(\int_{\frac{|x-c_i|}{2^{j+2}}}^\infty
\int_{|y-x|<2^jt}\frac{dydt}{t^{2(n+\alpha)+n+1}}\right)^{1/2}\\
\end{split}
\end{equation*}
\begin{equation*}
\begin{split}
&\le C\cdot2^{{jn}/2}\ell(Q_i)^{\alpha}\bigg(\int_{Q_i}|b_i(z)|\,dz\bigg)
\left(\int_{\frac{|x-c_i|}{2^{j+2}}}^\infty\frac{dt}{t^{2(n+\alpha)+1}}\right)^{1/2}\\
&\le C\cdot2^{j(3n+2\alpha)/2}\frac{\ell(Q_i)^{\alpha}}{|x-c_i|^{n+\alpha}}\bigg(\int_{Q_i}|f(z)|\,dz\bigg).
\end{split}
\end{equation*}
This estimate together with the Chebyshev's inequality yields
\begin{equation*}
\begin{split}
\mbox{\upshape IV}^{(2)}&\le\frac{4}{\sigma}\int_{\mathbb R^n\backslash\bigcup_i Q_i^*}\bigg|\sum_{j=1}^\infty 2^{-j\lambda n/2}\mathcal S_{\alpha,2^j}(b)(x)\bigg|w(x)\,dx\\
&\le\frac{4}{\sigma}\sum_{j=1}^\infty 2^{-j\lambda n/2}\sum_i\left(\int_{(Q_i^*)^c}\mathcal S_{\alpha,2^j}(b_i)(x)w(x)\,dx\right)\\
&\le\frac{C}{\sigma}\Bigg(\sum_{j=1}^\infty 2^{-j\lambda n/2}\cdot2^{j(3n+2\alpha)/2}\Bigg)
\left(\sum_i\ell(Q_i)^{\alpha}\int_{Q_i}|f(z)|\,dz\times\int_{(Q_i^*)^c}\frac{w(x)}{|x-c_i|^{n+\alpha}}dx\right)\\
&\le\frac{C}{\sigma}
\left(\sum_i\ell(Q_i)^{\alpha}\int_{Q_i}|f(z)|\,dz\times\int_{(Q_i^*)^c}\frac{w(x)}{|x-c_i|^{n+\alpha}}dx\right),
\end{split}
\end{equation*}
where the last inequality holds under our assumption of $\lambda>{(3n+2\alpha)}/n$. On the other hand, since $w\in A_1\subset A_{1+\alpha/n}$, then by Lemmas 2.3 and 2.1, we get
\begin{equation*}
\begin{split}
\int_{(Q_i^*)^c}\frac{w(x)}{|x-c_i|^{n+\alpha}}dx&=\int_{|y|\ge \sqrt n\ell(Q_i)} \frac{w_1(y)}{|y|^{n+\alpha}}dy\\
&\le C\cdot\ell(Q_i)^{-n-\alpha}w_1\big(Q(0,2\sqrt n\ell(Q_i))\big)\\
&=C\cdot\ell(Q_i)^{-n-\alpha}w\big(Q(c_i,2\sqrt n\ell(Q_i))\big)\\
&\le C\cdot\ell(Q_i)^{-n-\alpha}w(Q_i),
\end{split}
\end{equation*}
where $w_1(x)=w(x+c_i)$ is the translation of $w(x)$. It is obvious that $w_1\in A_1$ whenever $w\in A_1$. Hence, by using the $A_1$ condition again, we obtain
\begin{equation*}
\begin{split}
\mbox{\upshape IV}^{(2)}&\le\frac{C}{\sigma}\sum_i\frac{w(Q_i)}{|Q_i|}\int_{Q_i}|f(z)|\,dz\\
&\le\frac{C}{\sigma}\sum_i\underset{z\in Q_i}{\mbox{ess\,inf}}\,w(z)\int_{Q_i}|f(z)|\,dz\\
&\le\frac{C}{\sigma}\int_{\mathbb R^n}|f(z)|w(z)\,dz,
\end{split}
\end{equation*}
which is just our desired conclusion. Summarizing the estimates for I--IV derived above, we thus complete the proof of Theorem 4.2.
\end{proof}

We are now in a position to give the proof of Theorem 1.2.

\begin{proof}[Proof of Theorem 1.2]
Let $f\in L^{1,\kappa}(w)$. As in the proof of Theorem 1.1, we set $f=f_1+f_2$, where $f_1=f\chi_{_{2B}}$. Then for each fixed $\sigma>0$, we can write
\begin{equation*}
\begin{split}
&w\Big(\Big\{x\in B:\big|g^*_{\lambda,\alpha}(f)(x)\big|>\sigma\Big\}\Big)\\
\le\,& w\Big(\Big\{x\in B:\big|g^*_{\lambda,\alpha}(f_1)(x)\big|>\sigma/2\Big\}\Big)
+w\Big(\Big\{x\in B:\big|g^*_{\lambda,\alpha}(f_2)(x)\big|>\sigma/2\Big\}\Big)\\
  =\,&J_1+J_2.
\end{split}
\end{equation*}
Theorem 4.2 and Lemma 2.1 imply
\begin{equation*}
\begin{split}
J_1&\le\frac{C}{\sigma}\int_{2B}|f(y)|w(y)\,dy\\
&\le\frac{C\cdot w(2B)^\kappa}{\sigma}\|f\|_{L^{1,\kappa}(w)}\\
&\le\frac{C\cdot w(B)^\kappa}{\sigma}\|f\|_{L^{1,\kappa}(w)}.
\end{split}
\end{equation*}
We now turn to deal with the term $J_2$. Recall that in the proof of Theorem 1.1, we have already showed that for any $x\in B$,
\begin{equation}
\big|\mathcal S_\alpha(f_2)(x)\big|\le C\|f\|_{L^{1,\kappa}(w)}\cdot\frac{1}{w(B)^{1-\kappa}}.
\end{equation}
On the other hand, for any $x\in B$, $(y,t)\in\Gamma_{2^j}(x)$ and $z\in\big(2^{k+1}B\backslash 2^{k}B\big)\cap B(y,t)$, then by a simple calculation, we can easily deduce
\begin{equation*}
t+2^j t\ge |x-y|+|y-z|\ge|x-z|\ge|z-x_0|-|x-x_0|\ge 2^{k-1}r_B.
\end{equation*}
Hence, it follows from the previous inequality (1) and Minkowski's inequality that
\begin{equation*}
\begin{split}
\big|\mathcal S_{\alpha,2^j}(f_2)(x)\big|&=\left(\iint_{\Gamma_{2^j}(x)}\Big(\sup_{\varphi\in{\mathcal C}_\alpha}\big|f_2*\varphi_t(y)\big|\Big)^2\frac{dydt}{t^{n+1}}\right)^{1/2}\\
&\le C\left(\int_{2^{(k-2-j)}r_B}^\infty\int_{|x-y|<2^jt}\bigg|t^{-n}\sum_{k=1}^\infty\int_{2^{k+1}B\backslash 2^{k}B}|f(z)|\,dz\bigg|^2\frac{dydt}{t^{n+1}}\right)^{1/2}\\
&\le C\bigg(\sum_{k=1}^\infty\int_{2^{k+1}B\backslash 2^{k}B}|f(z)|\,dz\bigg)\bigg(\int_{2^{(k-2-j)}r_B}^\infty 2^{jn}\frac{dt}{t^{2n+1}}\bigg)^{1/2}\\
&\le C\cdot2^{{3jn}/2}\sum_{k=1}^\infty\frac{1}{|2^{k+1}B|}\int_{2^{k+1}B}|f(z)|\,dz.
\end{split}
\end{equation*}
Furthermore, by using the estimates (3) and (4), we can proceed as in (2) and get
\begin{equation}
\big|\mathcal S_{\alpha,2^j}(f_2)(x)\big|\le C\cdot2^{{3jn}/2}\|f\|_{L^{1,\kappa}(w)}\cdot\frac{1}{w(B)^{1-\kappa}}.
\end{equation}
Therefore, for any $x\in B$, by the inequalities (6), (12) and (13), we have
\begin{equation*}
\begin{split}
\big|g^*_{\lambda,\alpha}(f_2)(x)\big|&\le C\Bigg(\big|\mathcal S_\alpha(f_2)(x)\big|+\sum_{j=1}^\infty 2^{{-j\lambda n}/2}\big|\mathcal S_{\alpha,2^j}(f_2)(x)\big|\Bigg)\\
&\le C\|f\|_{L^{1,\kappa}(w)}\cdot\frac{1}{w(B)^{1-\kappa}}\Bigg(1+\sum_{j=1}^\infty2^{{-j\lambda n}/2}\cdot2^{{3jn}/2}\Bigg)\\
&\le C\|f\|_{L^{1,\kappa}(w)}\cdot\frac{1}{w(B)^{1-\kappa}},
\end{split}
\end{equation*}
where the last series is convergent since $\lambda>{(3n+2\alpha)}/n>3$. The rest of the
proof is exactly the same as that of Theorem 1.1, and we finally obtain
\begin{equation*}
J_2\le w(B)\le\frac{C\cdot w(B)^\kappa}{\lambda}\|f\|_{L^{1,\kappa}(w)}.
\end{equation*}
Combining the above estimates for $J_1$ and $J_2$, and then taking the supremum over all balls $B\subseteq\mathbb R^n$ and all $\sigma>0$, we conclude the proof of Theorem 1.2.
\end{proof}


\begin{thebibliography}{99}

\bibitem{adams} D. R. Adams, A note on Riesz potentials, Duke Math. J, \textbf{42}(1975), 765--778.
\bibitem{chang} S. Y. A. Chang, J. M. Wilson and T. H. Wolff, Some weighted norm inequalities concerning the Schr\"odinger operators, Comment. Math. Helv,  \textbf{60}(1985), 217--246.
\bibitem{chanillo} S. Chanillo and R. L. Wheeden, Some weighted norm inequalities for the area integral, Indiana Univ. Math. J, \textbf{36}(1987), 277--294.
\bibitem{chiarenza} F. Chiarenza and M. Frasca, Morrey spaces and Hardy-Littlewood maximal function, Rend. Math. Appl, \textbf{7}(1987), 273--279.
\bibitem{fan} D. S. Fan, S. Z. Lu and D. C. Yang, Regularity in Morrey spaces of strong solutions to nondivergence
    elliptic equations with VMO coefficients, Georgian Math. J, \textbf{5}(1998), 425--440.
\bibitem{fazio1} G. Di Fazio and M. A. Ragusa, Interior estimates in Morrey spaces for strong solutions to
    nondivergence form equations with discontinuous coefficients, J. Funct. Anal, \textbf{112}(1993), 241--256.
\bibitem{fazio2} G. Di Fazio, D. K. Palagachev and M. A. Ragusa, Global Morrey regularity of strong solutions to the
    Dirichlet problem for elliptic equations with discontinuous coefficients, J. Funct. Anal, \textbf{166}(1999),
    179--196.
\bibitem{garcia2} J. Garcia-Cuerva and J. Rubio de Francia, Weighted Norm Inequalities and Related Topics, North-Holland, Amsterdam, 1985.
\bibitem{gundy} R. F. Gundy and R. L. Wheeden, Weighted integral inequalities for nontangential maximal function,
    Lusin area integral, and Walsh-Paley series, Studia Math, \textbf{49}(1974), 107--124.
\bibitem{huang} J. Z. Huang and Y. Liu, Some characterizations of weighted Hardy spaces, J. Math. Anal. Appl, \textbf{363}(2010), 121--127.
\bibitem{komori} Y. Komori and S. Shirai, Weighted Morrey spaces and a singular integral operator, Math. Nachr,
    \textbf{282}(2009), 219--231.
\bibitem{lerner} A. K. Lerner, Sharp weighted norm inequalities for Littlewood-Paley operators and singular integrals, Adv. Math., \textbf{226}(2011), 3912--3926.
\bibitem{morrey} C. B. Morrey, On the solutions of quasi-linear elliptic partial differential equations, Trans. Amer.
    Math. Soc, \textbf{43}(1938), 126--166.
\bibitem{muckenhoupt1} B. Muckenhoupt, Weighted norm inequalities for the Hardy maximal function, Trans. Amer. Math.
    Soc, \textbf{165}(1972), 207--226.
\bibitem{peetre} J. Peetre, On the theory of $\mathcal L_{p,\lambda}$ spaces, J. Funct. Anal, \textbf{4}(1969),
    71--87.
\bibitem{stein2} E. M. Stein, On some functions of Littlewood-Paley and Zygmund, Bull. Amer. Math.
    Soc, \textbf{67}(1961), 99--101.
\bibitem{stein} E. M. Stein, Singular Integrals and Differentiability Properties of Functions, Princeton Univ. Press, Princeton, New Jersey, 1970.
\bibitem{wang1} H. Wang, The boundedness of some operators with rough kernel on the weighted Morrey spaces, Acta Math. Sinica (Chin. Ser), to appear.
\bibitem{wang5} H. Wang, Intrinsic square functions on the weighted Morrey spaces, J. Math. Anal. Appl, to appear.
\bibitem{wang6} H. Wang, Boundedness of intrinsic square functions on the weighted weak Hardy spaces, preprint, 2012.
\bibitem{wang7} H. Wang, Boundedness of fractional integral operators with rough kernels on weighted Morrey spaces, preprint, 2012.
\bibitem{wang2} H. Wang and H. P. Liu, Some estimates for Bochner-Riesz operators on the weighted Morrey spaces, Acta
    Math. Sinica (Chin. Ser), \textbf{55}(2012), 551--560.
\bibitem{wang3} H. Wang and H. P. Liu, The intrinsic square function characterizations of weighted Hardy spaces, Illinois J. Math, to appear.
\bibitem{wang4} H. Wang and H. P. Liu, Weak type estimates of intrinsic square functions on the weighted Hardy spaces, Arch. Math., \textbf{97}(2011), 49--59.
\bibitem{wilson1} M. Wilson, The intrinsic square function, Rev. Mat. Iberoamericana, \textbf{23}(2007), 771--791.
\bibitem{wilson2} M. Wilson, Weighted Littlewood-Paley Theory and Exponential-Square Integrability, Lecture Notes in Math, Vol 1924, Springer-Verlag, 2007.

\end{thebibliography}
\end{document}